\documentclass[a4paper, 10pt]{amsart}
\usepackage{dsfont}
\usepackage[all]{xy}
\usepackage{hyperref}
\usepackage{graphicx}

\title{Whitney categories and the Tangle Hypothesis}

\author{Conor Smyth and Jon Woolf}

\date{August 2011}

\thanks{
This work was made possible by the generous support of the Leverhulme Trust (Grant ref. F/00 025/AI). The second author would also like to thank the Newton Institute, Cambridge for their hospitality and support in April and May 2011, whilst this paper was in preparation, and Tom Leinster for several helpful conversations.}

\newtheorem{theorem}{Theorem}[section]
\newtheorem{proposition}[theorem]{Proposition}
\newtheorem{corollary}[theorem]{Corollary}
\newtheorem{lemma}[theorem]{Lemma}

\theoremstyle{definition}
\newtheorem{definition}[theorem]{Definition}
\newtheorem{convention}[theorem]{Convention}
\newtheorem{example}[theorem]{Example}
\newtheorem{examples}[theorem]{Examples}
\newtheorem{remark}[theorem]{Remark}


\newcommand{\defn}[1]{\emph{#1}}

\newcommand{\ie}{i.e.\ }

\newcommand{\N}{\mathbb{N}}

\newcommand{\R}{\mathbb{R}}

\newcommand{\id}{\mathrm{id}}

\newcommand{\unit}{\mathds{1}}

\newcommand{\pt}{{\rm pt}}

\DeclareMathOperator{\colim}{colim}


\newcommand{\cat}[1]{\mathsf{#1}} 
\newcommand{\sets}{\cat{Sets}}
\newcommand{\dagcat}{\cat{Dagger}}
\newcommand{\whit}[1]{{#1}\cat{Whit}}
\newcommand{\whitify}{\omega}
\newcommand{\wcat}[1]{\cat{#1}} 

\newcommand{\prestrat}[1]{\cat{Prestrat}_{#1}} 
\newcommand{\strat}[1]{\cat{Strat}_{#1}} 

\newcommand{\thcat}[2]{\Psi_{#1}\!\left(#2\right)} 
\newcommand{\tang}[2]{{#1}\wcat{Tang}^\textrm{fr}_{#2}} 
\newcommand{\gtang}[2]{{#1}\wcat{Tang}^G_{#2}} 

\newcommand{\sphere}[1]{\mathbb{S}^{#1}} 
\newcommand{\thom}[1]{\mathbb{M}{#1}} 
\newcommand{\cube}[1]{\mathbb{I}^{#1}} 

\newcommand{\rep}[1]{\wcat{Rep}\!\left(#1\right)} 

\newcommand{\presh}[1]{\cat{PreSh}\left(#1\right)} 
\newcommand{\sh}[1]{\mathcal{#1}} 

\newcommand{\sieve}[1]{\mathcal{#1}}
\newcommand{\nat}[2]{\mathrm{Nat}\left(#1,#2\right)}

\begin{document}

\maketitle
 
 \section{Introduction}
  \label{introduction}
  
 We propose a new notion of `$n$-category with duals', which we call a Whitney $n$-category.  There are two motivations. The first is to give a definition which makes the Baez--Dolan Tangle Hypothesis \cite{MR1355899} almost tautological. The Tangle Hypothesis is that, given suitable definitions of the terms in quotes,
 \begin{quote}
The `$n$-category of framed codimension $k$ tangles' is equivalent to the `free $k$-tuply monoidal $n$-category with duals on one object'.
\end{quote}
 This generalises Shum's theorem \cite{MR1268782} that the category of framed tangles in three dimensions is equivalent to the free tortile tensor category on one object.   In \S\ref{tangle hypothesis} we prove a version of the hypothesis by interpreting it as a statement about Whitney $n$-categories.   
 There is of course a price to pay for obtaining a simple proof of the Tangle Hypothesis, and that is that  Whitney $n$--categories are a geometric, as opposed to algebraic, theory of higher categories. Therefore to realise more fully the original conception one should relate Whitney categories to some more algebraic theory of higher categories. Sadly this is not something we understand how to do at this stage. 
 
 The second motivation, in fact the original one for this work, is to give a definition which enables us to construct `fundamental $n$-categories with duals' for each smooth stratified space. The idea here, also due to Baez and Dolan, is that there should be a variant of homotopy theory which detects aspects of the stratification of a stratified space. The invariants will not be groupoids but rather more general categories with duals (a groupoid is a category with duals with the additional property that the dual of a morphism is an inverse). They are obtained by restricting attention to maps into the space which are transversal to all strata; the full construction, and the functoriality, of the invariants is explained in \S\ref{transversal homotopy theory}.

The definition of Whitney category has a geometric flavour, and is intended for applications in smooth geometry. We borrow heavily from the ideas of Morrison and Walker expressed in \cite{Morrison:2011fk}. They promote the point of view that
\begin{enumerate}
\item it should be easier to define a notion of $n$-category with duals than of plain $n$-category;
\item one should consider higher morphisms of quite general shapes (not merely globules, simplices, or cubes);
\item rather than having a source and target, a morphism should have a `boundary' encompassing both.
\end{enumerate}
To emphasise the first point; Whitney categories are not a general theory of higher categories, but only a theory of `higher categories with duals'. This fragment of higher category theory appears to be simpler and more amenable to a geometric treatment. Despite Morrison and Walker's influence, our definition of Whitney category is quite different from their definition of topological or disk-like category. They give an inductive list of axioms, whereas we define an $n$-category as a presheaf of sets on a category $\prestrat{n}$ of stratified spaces and {\em prestratified} maps, whose restriction to the subcategory $\strat{n}$ of stratified spaces and {\em stratified} maps is a sheaf for a certain Grothendieck topology. The subscript $n$ refers to the fact that we consider only spaces of dimension $\leq n$, and that the morphisms are homotopy classes of maps relative to the strata of dimension $<n$. Roughly, by stratified space we mean a Whitney stratified space with cellular strata, by a stratified map we mean a smooth map whose restriction to each stratum in the source is a locally-trivial fibre bundle over a stratum in the target, and by a prestratified map we mean one which becomes stratified after a possible subdivision of the stratification of the source. The precise definitions, as well as the specification of the topology on $\strat{n}$, are the subject of \S\ref{stratified spaces}. 

The definition of Whitney category appears in \S\ref{whitney cat}. We consider Whitney categories as a full subcategory $\whit{n}$ of the presheaves on $\prestrat{n}$. Various formal properties follow; $\whit{n}$ is complete, cocomplete and there is a left adjoint to the inclusion into the presheaves, which associates a Whitney category to any presheaf. We also introduce a notion of equivalence of Whitney $n$-categories --- Definition \ref{equivalent} --- generalising the description of an equivalence of (ordinary) categories as a span of fully-faithful functors which are surjective on objects.

At first sight the notion of Whitney category is quite remote from the usual notion of category.  The intuitive picture is as follows. The set $\wcat{A}(X)$ associated to the space $X$ consists of the `morphisms in $\wcat{A}$ of shape $X$'. For example the point-shaped morphisms $\wcat{A}(\pt)$ are the objects. All our spaces carry specified stratifications, and these play an important r\^ole. For example the set associated to an interval stratified by its endpoints and interior is the $1$-morphisms, whereas the set associated to the subdivided interval with a third point stratum in the interior is the set of pairs of composable $1$-morphisms. This last assertion uses the fact that a Whitney category is a sheaf on $\strat{n}$. More generally, insisting that a Whitney category is a sheaf  ensures that the set it assigns to a space $X$ is determined by the sets it assigns to the (cellular) strata. One can think of $X$ as a template for pasting diagrams, and the set assigned to $X$ as the set of pasting diagrams in $\wcat{A}$ modelled on this template. Prestratified maps between spaces induce maps, in the opposite direction, between the corresponding sets. In particular, 
\begin{itemize}
\item the inclusion of the boundary induces a map taking a cell-shaped morphism to its `boundary', which plays the r\^ole of source and target combined;
\item the map to a point induces a map taking an object in $\wcat{A}(\pt)$ to the identity morphism (of appropriate shape) on that object;
\item a subdivision of a cell induces a map taking a pasting diagram modelled on the subdivided cell to its composite.
\end{itemize}
To further clarify the relation consider the simplest case of Whitney $0$-categories. Since $\prestrat{0}$ contains only $0$-dimensional spaces, the only information here is the set of objects $\wcat{A}(\pt)$. More precisely the map $\wcat{A} \mapsto \wcat{A}(\pt)$ induces an equivalence between the category of Whitney $0$-categories and the category of sets. The next simplest case, $n=1$, is treated in \S \ref{relation to ordinary categories}, where we show that the category of Whitney $1$-categories and the category of small dagger categories are equivalent. The sets $\wcat{A}(\pt)$ and $\wcat{A}([0,1])$ are respectively the objects and morphisms of the dagger category corresponding to $\wcat{A}$. This correspondence is our principal justification for considering Whitney $n$-categories as `$n$-categories with duals'.
 
Many of our examples will be {\em $k$-tuply monoidal} Whitney $n$-categories. By such we mean a Whitney $(n+k)$-category which is `trivial' in dimensions $<k$, \ie that assigns a one element set to any space $X$ with $\dim X < k$. This slightly confusing terminology makes sense if one recalls that a monoid can be viewed as a one-object category, a commutative monoid a one object, one morphism bicategory and so on. In \S\ref{monoidal whitney categories} we give a functorial procedure for associating a genuine Whitney $n$-category $\Omega^k\wcat{A}$ to a $k$-tuply monoidal one $\wcat{A}$, by considering the presheaf $\wcat{A}(\sphere{k} \times -)$ where $\sphere{k}$ is the $k$-sphere stratified by a point and its complement. This is the analogue of the re-indexing procedure used to turn a one-object category into a monoid.

Three classes of examples are discussed in \S\ref{examples}. Firstly, we show that representable presheaves are Whitney categories.  Secondly, we define a $k$-tuply monoidal Whitney $n$-category $\tang{n}{k}$ of framed tangles. The $X$-shaped morphisms are the set of framed codimension $k$ submanifolds of $X$, transversal to all strata, considered up to isotopies relative to strata of dimension $< n+k$. Interpreted in this framework the Tangle Hypothesis says: 
\begin{quote}
The Whitney category $\tang{n}{k}$ of framed tangles is equivalent to the {\em free} $k$-tuply monoidal Whitney $n$-category on one $\sphere{k}$-morphism. 
\end{quote}
We prove this in \S \ref{tangle hypothesis} by establishing an equivalence between $\tang{n}{k}$ and the Whitney $(n+k)$-category represented by the sphere $\sphere{k}$. This equivalence arises from the Pontrjagin--Thom collapse map construction which relates framed codimension $k$ tangles in $X$ to maps $X \to \sphere{k}$. The Whitney category represented by the sphere is, by the Yoneda Lemma, free on one $\sphere{k}$-morphism, namely the identity map of the sphere.

The third class of examples is provided by transversal homotopy theory: in \S\ref{transversal homotopy theory} we explain how to associate a transversal homotopy Whitney category $\thcat{k,n+k}{M}$ to each based Whitney stratified manifold $M$.  The $X$-shaped morphisms are the set of transversal maps $X\to M$ considered up to homotopy relative to strata in $X$ of dimension $< n+k$. We also insist that the maps are `based' in that strata in $X$ of dimension $<k$ are mapped to the basepoint. This makes $\thcat{k,n+k}{M}$ into a $k$-tuply monoidal Whitney $n$-category. For $n=0$ and $n=1$ these are closely related respectively to the transversal homotopy  monoids and the transversal homotopy categories introduced in \cite{MR2720181}. See \S \ref{transversal homotopy theory} and Example \ref{transversal homotopy relationship}  for details of the respective relationships. The use of Whitney categories thus allows us to extend the definitions of \cite{MR2720181} to arbitrary $n$, and provides a general framework for studying transversal homotopy theory. 

The transversal homotopy theory of spheres is also closely related to framed tangles. In \S \ref{transversal homotopy of spheres} we show that it is equivalent to the Whitney category represented by the sphere. Thus we have equivalences of $k$-tuply monoidal Whitney $n$-categories
$$
\thcat{k,n+k}{\sphere{k}} \simeq \rep{\sphere{k}} \simeq \tang{n}{k}
$$
yielding three descriptions of the same object which we can think of respectively as homotopy-theoretic, algebraic (in the sense that the representable Whitney category is free on one $\sphere{k}$-morphism) and geometric.

The final section \S\ref{other flavours} contains some remarks about the Tangle Hypothesis for tangles with other normal structures, and the relationship of these with transversal homotopy theory of Thom spaces other than the sphere.

Our examples and applications are in smooth geometry (smooth tangles, transversal homotopy theory, \ldots) so we have developed a smooth theory of $n$-categories with duals based on Whitney stratified spaces. This choice is not essential. Firstly, it is not clear that we need the Whitney conditions; the theory could be developed using the weaker notion of smooth spaces with manifold decompositions. However, the Whitney conditions are required to obtain a good theory of stratified smooth spaces, for instance to ensure that transversal maps form an open dense subset of all smooth maps. Since transversality plays a central r\^ole it seems  natural to impose the Whitney conditions, particularly when considering transversal homotopy theory. More generally, there seems no reason why one should not develop an analogous theory by starting instead with stratified PL spaces, or subanalytic ones or indeed any of a number of other choices. It would also be interesting to replace Whitney stratified spaces by a `combinatorial' category, for instance by symmetric simplicial sets. A better understanding of combinatorial versions of this theory seems the most likely way of building a bridge to Lurie's theory of $(\infty,n)$-categories with adjoints, and his proofs of the Tangle and Cobordism hypotheses \cite{Lurie:2009fk}.

\section{Stratified spaces and maps}
\label{stratified spaces}

\subsection{Whitney stratified spaces}

A \defn{stratification} of a smooth manifold $M$ is a  decomposition $M = \bigcup_{i\in \mathcal{S}} S_i$ into disjoint subsets $S_i$ indexed by a poset $\mathcal{S}$ such that
\begin{enumerate}
\item the decomposition is locally-finite,
\item $S_i \cap \overline{S_j} \neq \emptyset \iff S_i \subset \overline{S_j}$, and this occurs precisely when $i \leq j$ in $\mathcal{S}$,
\item each $S_i$ is a locally-closed smooth connected submanifold of $M$.
\end{enumerate}
The $S_i$ are referred to as the \defn{strata} and the partially-ordered set $\mathcal{S}$ as the \defn{poset of strata}. The second condition is usually called the \defn{frontier condition}. 

Nothing has been said about how the strata fit together from the point of view of smooth geometry. To govern this we impose further conditions, proposed by  Whitney \cite{MR0188486} following earlier ideas of Thom \cite{MR0239613}. Suppose $x\in S_i \subset \overline{S_j}$ and that we have sequences $(x_k)$ in $S_i$ and $(y_k)$ in $S_j$ converging to $x$. Furthermore, suppose that the secant lines $\overline{x_ky_k}$ converge to a line $L\leq  T_xX$ and the tangent planes $T_{y_k}S_j$ converge to a plane $P \leq T_xM$. (An intrinsic definition of the limit of secant lines can be obtained by taking the limit of $(x_k,y_k)$ in the blow-up of $M^2$ along the diagonal, see \cite[\S4]{Mather:1970fk}. The limit of tangent planes is defined in the Grassmannian $Gr_d(TM)$ where $d=\dim S_j$. The limiting plane $P$ is referred to as a \defn{generalised tangent space} at $x$.) In this situation we require
 \begin{description}
\item[(Whitney A)] the tangent plane $T_xS_i$ is a subspace of the limiting plane $P$;
\item[(Whitney B)] the limiting secant $L$ is a subspace of the limiting plane $P$.
\end{description}
Mather \cite[Proposition 2.4]{Mather:1970fk} showed that the second Whitney condition implies the first. Nevertheless, it is useful to state both conditions because the first is often what one uses in applications, whereas the second is necessary  to ensure that the normal structure to a stratum is locally topologically trivial, see for example \cite[1.4]{GoM2}.

A \defn{Whitney stratified manifold} is a manifold with a stratification satisfying the Whitney B condition. A 
\defn{Whitney stratified space}  is a closed union of strata $X$ in a Whitney stratified manifold $M$.  Examples abound, for instance any manifold with the \defn{trivial stratification} which has only one stratum is a Whitney stratified manifold. More interestingly, any complex analytic variety admits a Whitney stratification \cite{MR0188486}, indeed any (real or complex) subanalytic set of an analytic manifold admits a Whitney stratification \cite{MR0377101,H}.

A continuous map $f:X\to Y$ of Whitney stratified spaces is \defn{smooth} if it extends to a smooth map of the ambient manifolds. The notion of smoothness depends only on the germ of the ambient space, in fact only on the equivalence class of the germ generated by embeddings into larger ambient spaces. By embedding the manifold $M$ we may always assume that the ambient space of $X$ is Euclidean. 
\begin{definition}
A \defn{stratified smooth space} $X$ is the stable germ of a compact Whitney stratified subspace of some $\R^k$, where we stabilise by the standard inclusions $\R^k \hookrightarrow \R^{k+1} \hookrightarrow \cdots$. We abuse notation by using the same letter to denote the germ and the underlying Whitney stratified space. A \defn{smooth map of stratified smooth spaces} is the stable germ of a smooth map, where we stabilise by taking products with $\R$.
\end{definition}

We will restrict our attention to stratified spaces glued together from cells: a \defn{cellular stratified space} is a stratified space $X$ in which each stratum $S$ is contractible.
\begin{examples}
\label{strat ex}
\begin{enumerate}
\item Let $\cube{}$ be the germ of $\R$ along the interval $[0,1]$ stratified by the endpoints and interior. Similarly let $\cube{n}$ be the germ of $\R^n$ along $[0,1]^n$ stratified in the obvious fashion by faces. 
\item Let $\sphere{n}$ be the sphere $S^n$ stratified by a point, call it $0$, and its complement and considered as a germ of $\R^{n+1}$. 
\end{enumerate}
\end{examples}

\subsection{Stratified maps}

We are not interested in all smooth maps, but only those which interact nicely with the given stratifications. 
\begin{definition}
\label{strat sub}
A smooth map $f:X \to Y$ is a \defn{stratified submersion} if for any stratum $B$ of $Y$ 
\begin{enumerate}
\item the inverse image $f^{-1}B$ is a union of strata of $X$ and
\item for any stratum $A \subset f^{-1}B$ the restriction $f|_A : A \to B$ is a submersion.
\end{enumerate}
\end{definition}
Whether or not a smooth map is stratified depends only upon the map of underlying spaces, and not on the germ. The composite of stratified submersions is a stratified submersion. Thom's first isotopy lemma implies that the restriction $f|_{f^{-1}B} : f^{-1}B \to B$ is topologically a locally trivial fibre bundle. 
 
\begin{convention}
For ease of reading, in the sequel we refer to stratified smooth spaces and stratified submersions simply as \defn{stratified spaces} and \defn{stratified maps}. A map is \defn{weakly stratified} if it obeys only the first condition of Definition \ref{strat sub}.
\end{convention}
Stratified maps $f,g:X \to Y$ are \label{htpy} \defn{homotopic through stratified maps relative to strata of dimension $<n$} if there is a smooth map germ $h: X \times [0,1] \to Y$ such that 
\begin{enumerate}
\item each $h(-,t) : X \to Y$ is stratified and
\item $h(x,t) = h(x,0)$ for all $t\in [0,1]$ and $x$ in a stratum $S\subset X$ with $\dim S <n$.
\end{enumerate}
This is an equivalence relation with the property that $f\sim g$ implies $f \circ h \sim g\circ h$ and $h\circ f \sim h \circ g$ for stratified $h$. The first implication uses the fact that a stratified map sends strata to strata of equal or lower dimension.

\begin{definition}
\label{stratn}
Fix $n \in \N \cup \{\infty\}$. Let $\strat{n}$ be the category whose objects are the compact cellular stratified spaces of dimension $\leq n$ and whose morphisms are homotopy classes of stratified maps relative to strata of dimension $<n$. In particular $\strat{\infty}$ is the category of stratified spaces and stratified maps between them. The category $\strat{n}$ is small; the objects are certain subsets of Euclidean spaces, and the morphisms certain maps between these subsets. 
\end{definition}

\subsection{The stratified site}
\label{topology}

In this section we specify a Grothendieck topology on $\strat{n}$ so that it becomes a site. Recall that to do so we must specify a collection of covering sieves for each object $X$, satisfying certain conditions. A sieve on $X$ is a collection of morphisms with target $X$ which is closed under precomposition. First we need the following lemma.
\begin{lemma}
\label{fibre prod}
Suppose $f: X \to Z \leftarrow Y:g$ are stratified maps. Then $X \times_Z Y$ can be stratified by the fibre products of the strata of $X$ and $Y$ so that 
\begin{equation}
\label{fibre prod diag}
\xymatrix{
X\times_Z Y \ar[d] \ar[r] & Y \ar[d]^g\\
X \ar[r]_f & Z
}
\end{equation}
is a commuting diagram of stratified maps. Moreover, if $X, Y$ and $Z$ are cellular then this stratification is cellular.
 \end{lemma}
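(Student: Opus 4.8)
The plan is to build the stratification of $X \times_Z Y$ out of the fibre products of individual strata, to read off the axioms from the submersion hypothesis, and to extract the Whitney conditions from a stratified transversality argument. First I would isolate the set-theoretic pieces. Since $f$ and $g$ are stratified, each stratum $A$ of $X$ has connected image lying in a single stratum of $Z$, say $f(A)\subseteq C$, and likewise $g(B)\subseteq C'$ for each stratum $B$ of $Y$. Distinct strata of $Z$ are disjoint, so the condition $f(x)=g(y)$ forces $C=C'$; hence, as a set,
\[
X \times_Z Y = \bigsqcup A \times_C B,
\]
the union taken over those pairs of strata $A$, $B$ for which $f(A)$ and $g(B)$ lie in a common stratum $C$ of $Z$. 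Each piece $A \times_C B$ is the preimage of the diagonal $\Delta_C$ under the product submersion $f|_A \times g|_B : A \times B \to C \times C$; as submersions are transverse to every submanifold, $A\times_C B$ is a smooth locally closed submanifold of $X \times Y$ of dimension $\dim A + \dim B - \dim C$. (Should a piece be disconnected I pass to its connected components; in the cellular case treated below this is unnecessary.)

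Next I would verify that these pieces assemble into a stratification and that the square behaves correctly. Commutativity is automatic from the universal property. Local finiteness is immediate, since $X$, $Y$ and $Z$ are compact and hence have finitely many strata, and $X\times_Z Y$ is closed in the compact space $X \times Y$ and so compact. Each projection is a stratified map: the preimage of a stratum $A$ under $X \times_Z Y \to X$ is the union of the $A \times_C B$ over those strata $B$ with $g(B)$ in the same stratum $C$ as $f(A)$, a union of strata, while the restriction $A \times_C B \to A$ is the base change of the submersion $g|_B : B \to C$ along $f|_A$ and so is again a submersion; the projection to $Y$ is handled symmetrically.

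The heart of the matter, and the step I expect to be the main obstacle, is the Whitney (and frontier) conditions. The natural strategy is to realise the fibre product as a transverse preimage: the product $X \times Y$ is Whitney stratified by the products $A \times B$, the product map $f \times g : X \times Y \to Z\times Z$ is a stratified submersion, and the diagonal $\Delta_Z \subseteq Z \times Z$ is a Whitney stratified subspace with strata $\Delta_C$. One has $X\times_Z Y = (f\times g)^{-1}(\Delta_Z)$, and since each $f|_A \times g|_B$ is a submersion, hence transverse to every $\Delta_C$, the map $f \times g$ is transverse to $\Delta_Z$ stratumwise. Invoking the standard fact that a transverse preimage of a Whitney stratified subspace is again Whitney stratified, with strata the components of the preimages of strata (see e.g. \cite{GoM2}), yields a Whitney stratification whose strata are precisely the $A \times_C B$ found above, and in particular gives the frontier condition. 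The subtlety here, and the reason one cannot simply intersect inside the ambient Euclidean space, is that the ambient smooth extensions of $f$ and $g$ need not be submersions, so the relevant transversality holds only stratumwise; it is therefore essential to use the stratified form of the transversality theorem rather than a naive intersection of smooth preimages, and making this invocation precise (or reproving it locally from Thom--Mather control data, cf. \cite{Mather:1970fk}) is where the real work lies.

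Finally, for the cellular statement I would assume $A$, $B$ and $C$ contractible. By Thom's first isotopy lemma, applied to the proper stratified submersions $f$ and $g$ as noted after Definition \ref{strat sub}, the fibre bundles $f^{-1}(C)\to C$ and $g^{-1}(C)\to C$ are locally trivial, and their stratum-preserving trivialisations restrict to local trivialisations of $f|_A : A \to C$ and $g|_B : B \to C$. Since $C$ is contractible these bundles are trivial, so up to homeomorphism $A \cong C \times F_A$ and $B \cong C \times F_B$ over $C$, with $F_A \simeq A$ and $F_B \simeq B$ contractible. The fibre product then becomes $A \times_C B \cong C \times F_A \times F_B$, a product of contractible spaces, hence contractible; in particular it is connected, so no passage to components is required and the resulting stratification is cellular.
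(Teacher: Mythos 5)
Your set-up matches the paper's for most of the routine steps: the decomposition into pieces $A\times_C B$, their identification as submanifolds via the submersion $f|_A\times g|_B$ (the paper phrases this as transversality of $f|_A$ and $g|_B$), the verification that the projections are stratified, and the cellularity argument. Indeed your cellularity argument is arguably cleaner than the paper's, which deduces weak contractibility of $S\times_Z T$ from the long exact homotopy sequences of the fibrations $F\hookrightarrow T\to g(T)$ and $F\hookrightarrow S\times_Z T\to S$ and then applies Whitehead's theorem, rather than trivialising the bundles outright. The problem is exactly the step you flag as ``where the real work lies'': the Whitney~B condition. The standard fact you invoke --- that a transverse preimage of a Whitney stratified subspace is again Whitney stratified --- requires transversality in the \emph{ambient manifold} of the target, and that is precisely what fails here. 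Writing $M$ for the ambient manifold of $Z$ and $c\in C$, one has
$d(f\times g)\bigl(T(A\times B)\bigr)+T_{(c,c)}\Delta_C\subseteq \bigl(T_cC\times T_cC\bigr)+\Delta_{T_cM}$,
which is a proper subspace of $T_cM\times T_cM$ whenever $C$ is not open in $M$; the transversality you actually have ($f|_A\times g|_B$ submersive onto $C\times C$, hence transverse to $\Delta_C$ \emph{inside} $C\times C$) lives entirely within a single stratum of the base. So the statement you need is not the transverse-preimage theorem but a genuine ``fibre product over a stratified base'' lemma --- which is the content of the result being proved. Deferring to it without a precise reference is circular.

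The paper fills this gap with a short direct computation, and you should do the same rather than appeal to control data. Take sequences $(a_i,b_i)\in A\times_C B$ and $(s_i,t_i)\in S\times_{C'}T$ in the fibre product converging to $(a,b)\in A\times_C B$, with convergent secant lines $\ell_i$ and tangent planes. Whitney~B for the \emph{product} stratification of $X\times Y$ gives $\lim\ell_i\in\lim T_{(s_i,t_i)}(S\times T)$, and because all the points lie in the fibre product the limiting secant in fact lies in $U=\{(v,w)\in\lim T_{(s_i,t_i)}(S\times T)\ :\ df(v)=dg(w)\}$. It then suffices to show $U=\lim T_{(s_i,t_i)}(S\times_Z T)$. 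The inclusion $\supseteq$ is clear; for $\subseteq$, given $(v,w)=\lim(v_i,w_i)$ with $(v_i,w_i)\in T_{(s_i,t_i)}(S\times T)$ and $df(v)=dg(w)$, use the fact that $f|_S$ is a submersion onto $f(S)=g(T)$ to choose $v_i'\in T_{s_i}S$ with $df(v_i')=df(v_i)-dg(w_i)$ and $v_i'\to 0$; then $(v_i-v_i',w_i)\in T_{(s_i,t_i)}(S\times_Z T)$ converges to $(v,w)$. This is the entire content of the ``stratified form of the transversality theorem'' in the situation at hand, and it uses nothing beyond the submersion hypothesis.
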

 \begin{proof}
Consider $X\times_Z Y = \{ (x,y) \in X \times Y \ : \ f(x)=g(y) \} \subset X\times Y$. We equip this with the germ along this subset of the product of the germs of $X$ and $Y$. It is decomposed into the subsets $A \times_{f(A)=g(B)} B$ where $A\subset X$ and $B \subset Y$ are strata. Each of these is a manifold because $f|_A$ and $g|_B$ are transversal. This decomposition satisfies the Whitney B condition: Suppose $(a_i, b_i) \in A\times_ZB$ and $(s_i,t_i)\in S\times_ZT$ are sequences in $X\times_ZY$ with the same limit $(a,b)\in A\times_Z B$. The product stratification of $X\times Y$ satisfies the Whitney B condition. Hence (when the limits exist in the ambient tangent space) 
$$
\lim \ell_i \in \lim T_{(s_i,t_i)}(S\times T)
$$
where $\ell_i$ is the secant line between $(a_i,b_i)$ and $(s_i,t_i)$. In fact since these pairs lie in the fibre product, the limiting secant line lies in the subspace
$$
U=\{ (v,w) \in \lim T_{(s_i,t_i)}(S\times T) \ : \ df(v)=dg(w) \}.  
$$
Clearly $U \supset \lim T_{(s_i,t_i)}(S\times_Z T)$; in fact they are equal. For suppose $(v_i,w_i)\in T_{(s_i,t_i)} (S\times T)$ is a sequence with limit $(v,w)$. Then 
$$
df(v_i)-dg(w_i) \to df(v) - dg(w) = 0.
$$
Since $f$ is submersive onto the tangent space of $f(S)=g(T)$ we can find $v'_i\in T_{s_i}S$ with $df(v'_i) = df(v_i)-dg(w_i)$ and $v'_i\to 0$. Then 
$$
(v_i-v'_i,w_i)\in T_{(s_i,t_i)}(S\times_Z T)
$$
and $(v_i-v'_i,w_i) \to (v,w)$. Hence $U\subset  \lim T_{(s_i,t_i)}(S\times_Z T)$ as claimed. Therefore the given decomposition of the fibre product satisfies the Whitney B condition, and the fibre product becomes a stratified space. It is easy to check that the maps in (\ref{fibre prod diag}) are stratified. 

Suppose that $X, Y$ and $Z$ are cellular. Then by considering the long exact sequences of homotopy groups induced respectively from the fibrations $F \hookrightarrow T \to g(T)$ and $F \hookrightarrow S\times_Z T \to S$ and using the fact that each of $S, T$ and $g(T)$ is contractible we see that  $S\times_ZT$ is weakly contractible. Since it is a smooth manifold it is homotopy equivalent to a CW complex, and so by Whitehead's Theorem it is contractible. Hence $X\times_ZY$ is cellular.
\end{proof}
\begin{remark}
The stratified space $X\times_ZY$ is not in general a fibre product in $\strat{\infty}$ (because of the constraints on dimension there is no hope that $\strat{n}$ for $n\in \N$ will have products). For example if $Z=\pt$ and $X=Y=\cube{}$ then $X^2$ does not have the required universal property because the inclusion of the diagonal is weakly stratified but not stratified. Moreover, it is impossible to subdivide the stratification of $X^2$ so that it becomes a fibre product; to do so one would require that the graph of every strictly monotonic and surjective function $(0,1)\to (0,1)$ was a stratum. Hence we have the stronger statement that the category of stratified spaces and maps does not have products in general.

Despite not being a fibre product, many familiar properties hold, in particular there is an isomorphism $W\times_X (X \times_Y Z) \cong W \times_Y Z$ in $\strat{\infty}$.
\end{remark}

\begin{definition}
\label{covering sieve}
A stratified map $f:Y \to X$ \defn{trivially covers} a stratum $A\subset X$ if $f^{-1}A$ is a single stratum and $f|_{f^{-1}A}$ a diffeomorphism. 
\end{definition}
\begin{proposition}
There is a Grothendieck topology on $\strat{n}$ in which a \defn{covering sieve} on $X$ is one such that for each stratum of $X$ there is a map in the sieve trivially covering that stratum. (In general a covering sieve will contain many such maps.)
\end{proposition}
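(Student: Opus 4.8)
The plan is to verify directly, from the characterisation in the statement, that the proposed covering sieves satisfy the three axioms for a Grothendieck topology: (T1) the maximal sieve on each $X$ is covering; (T2) covering sieves are stable under pullback along arbitrary morphisms; and (T3) the covering sieves satisfy the local-character (transitivity) condition. Throughout I read ``a map in the sieve trivially covering a stratum $A$'' in the sense of Definition \ref{covering sieve}, choosing representatives of homotopy classes where necessary; since trivial covering is a condition on an actual stratified map this causes no essential difficulty. Axiom (T1) is then immediate, because the maximal sieve on $X$ contains the class of $\id_X$, and $\id_X$ trivially covers every stratum $A$: its preimage is $A$ and the restriction is the identity diffeomorphism.

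For the transitivity axiom (T3) I would argue that trivial covers compose. Suppose $S$ is covering on $X$ and $T$ is a sieve on $X$ such that $f^*T$ is covering for every $f$ in $S$. Given a stratum $A$ of $X$, choose $f : W \to X$ in $S$ trivially covering $A$, so $f^{-1}A$ is a single stratum $W_A$ and $f|_{W_A}$ is a diffeomorphism onto $A$. Since $f^*T$ is covering on $W$ there is $g : V \to W$ in $f^*T$ trivially covering $W_A$; then $f \circ g$ lies in $T$, its preimage of $A$ is the single stratum $g^{-1}(W_A)$, and its restriction there is a composite of two diffeomorphisms. Hence $f\circ g$ trivially covers $A$, and as $A$ was arbitrary $T$ is covering. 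I expect this step to be routine.

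The substantive step is the stability axiom (T2). Let $S$ be covering on $X$ and let $h : Y \to X$ be arbitrary; I must produce, for each stratum $B$ of $Y$, a map in the pullback sieve $h^*S$ trivially covering $B$. Because $h$ is stratified, $B$ maps into a single stratum $A$ of $X$; pick $f : W \to X$ in $S$ trivially covering $A$. Using Lemma \ref{fibre prod} I form the stratified fibre product $W \times_X Y$ with its two projections $\pi_W, \pi_Y$. The commuting square gives $h \circ \pi_Y = f \circ \pi_W$, so this composite lies in $S$ (a sieve is closed under precomposition) and therefore $\pi_Y$ lies in $h^*S$. Inspecting the fibre-product strata shows $\pi_Y^{-1}(B) = W_A \times_X B$ is a single stratum, and since $f|_{W_A}$ is a diffeomorphism onto $A$ the projection identifies $W_A \times_X B$ diffeomorphically with $B$; thus $\pi_Y$ trivially covers $B$.

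The one real obstacle is that $W \times_X Y$ need not lie in $\strat{n}$ for finite $n$: by transversality a fibre-product stratum $C \times_X D$ has dimension $\dim C + \dim D - \dim A'$, where $A'$ is the common image stratum, and this can exceed $n$. I would repair this by replacing $W \times_X Y$ with the union $Z'$ of its strata of dimension $\le n$. In a Whitney stratified space a stratum in the frontier of another has strictly smaller dimension, so this index set is downward-closed in the poset of strata and the corresponding (finite, hence closed) union is closed; being a closed union of strata in the ambient Whitney stratified manifold, $Z'$ is again a compact cellular Whitney stratified space, now of dimension $\le n$, so $Z' \in \strat{n}$. The distinguished stratum $W_A \times_X B$ has dimension $\dim B \le n$ and so survives in $Z'$, while the restrictions of $\pi_W$ and $\pi_Y$ to $Z'$ remain stratified; the identity $h \circ \pi_Y|_{Z'} = f \circ \pi_W|_{Z'}$ then keeps $\pi_Y|_{Z'}$ in $h^*S$. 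Hence $\pi_Y|_{Z'}$ is a morphism of $\strat{n}$ in $h^*S$ trivially covering $B$, which finishes (T2) and with it the proposition. I expect the dimension truncation to be the only point requiring care; the remaining verifications are formal.
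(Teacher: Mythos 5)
Your proposal is correct and follows essentially the same route as the paper: the maximal-sieve and transitivity axioms are handled identically, and stability is proved via the stratified fibre product of Lemma \ref{fibre prod}. The only (immaterial) difference is in handling the dimension bound: the paper restricts the projection to the closure of the single distinguished stratum $A' \times_X f^{-1}(g(A'))$, which automatically has dimension $\leq n$, whereas you truncate the whole fibre product to its strata of dimension $\leq n$; both yield a legitimate object of $\strat{n}$ trivially covering the given stratum.
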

\begin{proof}
We verify the axioms for a topology. Suppose $\sieve{S}$ is a covering sieve on $X$ and $g: X' \to X$ any stratified map. Then the pullback sieve 
$$
g^*\sieve{S} = \{ f': Y' \to X' \ | \ gf'\in\sieve{S} \}
$$
 should also be a covering sieve. Fix a stratum $A'\subset X'$. Suppose that $f:Y \to X$ trivially covers the stratum $g(A)$. By Lemma \ref{fibre prod} there is a commutative diagram
$$
\xymatrix{
A' \times_X f^{-1}\left(g(A')\right) \ar[r] \ar[d] & X'\times_X Y \ar[r] \ar[d]^{f'} & Y \ar[d]^f\\
A' \ar[r] &  X' \ar[r]_g & X.
}
$$ 
of stratified maps. The pre-image of $A'$ under $f'$ is the single stratum $A' \times_X f^{-1}\left(g(A')\right)$ and the left hand vertical map is a diffeomorphism. The closure of the latter stratum is of dimension $\leq n$, therefore is in the pullback sieve and trivially covers $A'$. Hence the pullback sieve is a covering sieve for $X'$.

Let $\sieve{S}$ be a covering sieve on $X$, and let $\sieve{T }$ be any sieve on $X$. Suppose that for each stratified map $f: Y \to X$ in $\sieve{S}$, the pullback sieve $f^*\sieve{T}$ is a covering sieve on $Y$. We must show that $\sieve{T}$ is a covering sieve on $X$. Fix a stratum $A\subset X$. Since $\sieve{S}$ is a covering sieve for $X$ we can find $f: Y \to X$ trivially covering $A$. Since $f^*\sieve{T}$ is a covering sieve for $Y$ we can find $g: Z \to Y$ trivially covering $f^{-1}A$. Then $gf: Z \to X$ is in the sieve $\sieve{S}$ and trivially covers $A$, so we are done.

Finally, we must verify that the maximal sieve of all stratified maps with target $X$ is a covering sieve. This is immediate since the identity map is in the maximal sieve and trivially covers every stratum.
\end{proof}
Having defined a topology we may speak of sheaves on $\strat{n}$. Recall that these are presheaves $\sh{A}$ such that elements of $\sh{A}(X)$ are given by matching families of elements for any covering sieve. More precisely, consider a covering sieve $\sieve{S}$ on $X$ as a presheaf 
$Y \mapsto \{ f: Y \to X \ |  \ f\in \sieve{S} \}$.
Then a presheaf $\sh{A}$ is a sheaf if and only if the map
\begin{equation}
\label{sheaf condition}
\sh{A}(X) \to \nat{\sieve{S}}{\sh{A}} : a \mapsto \left( f \mapsto f^*a\right)
\end{equation}
is an isomorphism for each covering sieve $\sieve{S}$. A natural transformation $\eta \in \nat{\sieve{S}}{\sh{A}}$ is a collection of elements $a_f \in \sh{A}(Y)$ for each $f:Y\to X$ in the sieve $\sieve{S}$ which `match' in the sense that $g^*a_f = a_{fg}$ for any $g: Y' \to Y$. Here, and in the sequel, we write $g^*$ for $\sh{A}(g)$. In these terms, $\sh{A}$ is a sheaf if and only if each matching family has a unique amalgamation $a\in \sh{A}(X)$ such that $a_f = f^*a$.

\subsection{Prestratified maps}

Stratified maps are rather rigid, and a more flexible notion is useful. A \defn{subdivision} $X'$ of a stratified space $X$ is a Whitney stratification of the underlying space of $X$ each of whose strata is contained within some stratum of $X$. We equip $X'$ with the same stable germ.
\begin{definition}
A smooth map $f: X \to Y$ is \defn{prestratified} if it becomes stratified with respect to some subdivision of the source $X$. 
\end{definition}
Clearly any stratified map is prestratified. If $X'$ is a non-trivial subdivision of $X$ then the identity $X \to X'$ is prestratified, but not {\it vice versa}.

\begin{lemma}
If $f: X \to Y$ and $g:Y\to Z$ are prestratified then so is the composite $gf: X \to Z$. 
\end{lemma}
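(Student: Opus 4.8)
The plan is to reduce the statement to a single key fact: that a subdivision of the \emph{target} of a stratified map can be pulled back to a subdivision of the \emph{source} along which the map becomes stratified. Granting this, the lemma follows quickly. Since $f$ is prestratified there is a subdivision $X_1$ of $X$ with $f : X_1 \to Y$ stratified, and since $g$ is prestratified there is a subdivision $Y_1$ of $Y$ with $g : Y_1 \to Z$ stratified. The map $f : X_1 \to Y$ takes values in the coarse stratification $Y$, so it does not immediately compose with $g : Y_1 \to Z$; the point is to refine $X_1$ further so that $f$ becomes stratified as a map into the finer space $Y_1$. Once we have a subdivision $X_2$ of $X_1$ with $f : X_2 \to Y_1$ stratified, the composite $gf : X_2 \to Z$ is a composite of stratified maps, hence stratified (the composite of stratified submersions is a stratified submersion). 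As subdivision is visibly transitive --- a stratum of $X_2$ lies in a stratum of $X_1$, which lies in a stratum of $X$ --- the space $X_2$ is a subdivision of $X$, and therefore $gf$ is prestratified.

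So the content is the following claim: if $h : X' \to Y$ is stratified and $Y'$ is a subdivision of $Y$, then $X'$ admits a subdivision $X''$ with $h : X'' \to Y'$ stratified. I would construct $X''$ as the decomposition whose pieces are the connected components of $A \cap h^{-1}(B')$, as $A$ ranges over strata of $X'$ and $B'$ over strata of $Y'$. Because $h$ is stratified, each stratum $A$ is carried submersively onto a single stratum $B$ of $Y$, and the strata $B' \subset B$ of $Y'$ partition $B$; since a submersion is transversal to every submanifold, $(h|_A)^{-1}(B') = A \cap h^{-1}(B')$ is a submanifold of $A$ and $h$ restricts to a submersion of it onto $B'$. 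This already gives conditions (1) and (2) of Definition \ref{strat sub} for $h : X'' \to Y'$, shows that each piece is contained in a stratum of $X'$ (so that $X''$ refines $X'$), and --- since our spaces are compact and the decomposition is locally finite --- gives local finiteness.

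The main obstacle is to check that this decomposition is genuinely a Whitney stratification, \ie that it satisfies the Whitney B condition (the frontier condition and connectedness of strata then come for free, the latter by passing to connected components). Here I would exploit that $h|_A$ is a submersion: locally near any point of $A$ one can trivialise $h|_A$ as a projection $\R^k \times B \to B$, under which the refinement $A \cap h^{-1}(B')$ becomes the product $\R^k \times B'$. Thus, locally, the refined stratification of $A$ is diffeomorphic to the product of a trivial stratification with the subdivision of $B$ induced by $Y'$, and since products of Whitney stratifications satisfy Whitney B (as already used in the proof of Lemma \ref{fibre prod}) and Whitney B is a local, diffeomorphism-invariant condition, it holds within each $A$. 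The delicate case is Whitney B between refined pieces coming from distinct strata $A \subset \overline{A'}$ of $X'$; for this I would mimic the limit-of-secants-and-tangent-planes computation in Lemma \ref{fibre prod}, using the submersivity of $h$ to lift tangent vectors and to transfer the already-known Whitney B condition for $Y'$ inside $Y$ back up to $X''$. This last verification is where the real work lies.
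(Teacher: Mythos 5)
Your proposal is correct and follows essentially the same route as the paper: the paper also refines $X'$ by the pieces $f^{-1}(B)\cap A$ (for $A$ a stratum of $X'$ and $B$ a stratum of $Y'$ inside $f(A)$), using submersivity of $f|_A$ to see these are submanifolds, and then verifies Whitney~B for the refinement by combining Whitney~B for $X'$ with Whitney~B for $Y'$ applied to the image sequences under $df$ --- exactly the ``transfer back up via submersivity'' step you identify as the real work. The only cosmetic differences are that you isolate the pullback-of-subdivisions claim as a separate lemma and treat the within-one-stratum case by a local product trivialisation, whereas the paper handles all pairs of refined strata in a single secant-line computation.
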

\begin{proof}
Choose subdivisions $X'$ of $X$ and $Y'$ of $Y$ so that $f:X'\to Y$ and $g:Y'\to Z$ are stratified. Suppose $A$ is a stratum of $X'$. Then $f(A)$ is a stratum of $Y$. Further suppose $B$ is a stratum of $Y'$ contained in $f(A)$. Then $f|_A: A \to f(A)$ is a submersion and hence is transversal to $B$. So $f^{-1}(B) \cap A$ is a submanifold of $A$. The collection of these as $A$ and $B$ vary through the strata of $X'$ and $Y'$ respectively forms a decomposition $X''$ of $X$, subordinate to the stratification $X'$. The Whitney conditions for $X''$ follow from those for $X'$ and for $Y'$. To see this recall that we need only verify the Whitney B condition. Suppose $x_i \in f^{-1}B_0 \cap A_0$ and $y_i \in f^{-1}B_1 \cap A_1$ are sequences with common limit $x\in f^{-1}B_0 \cap A_0$. When the limiting secant and tangent plane exist, 
$$
\lim \overline{x_iy_i} \in \lim T_{y_i}A_1
$$
by Whitney B for $X'$. Now consider the image sequences $f(x_i) \in T$ and $f(y_i) \in T'$. By Whitney B for $Y'$ we know that
$$
df\left( \lim \overline{x_iy_i}\right) = \lim \overline{ f(x_i)f(y_i)} \in \lim T_{f(y_i)}B_1.
$$
Combining these we see that $\lim \overline{x_iy_i} \in \lim T_{y_i} ( f^{-1}B_1 \cap A_1)$ as required.  By construction the composite $gf:X'' \to Z$ is stratified.
\end{proof}
\begin{definition}
Fix $n \in \N \cup \{\infty\}$. Let $\prestrat{n}$ be the category whose objects are the compact cellular stratified spaces of dimension $\leq n$ and whose morphisms are homotopy classes of prestratified maps relative to strata of dimension $<n$. The definition of homotopy used here is identical to that just prior to Definition \ref{stratn}, except that we replace stratified by prestratified throughout. Like $\strat{n}$ this is a small category.
\end{definition}

\section{Whitney categories}
\label{whitney cat}
\begin{definition}
\label{wcat defn}
Fix $n\in \N \cup \{\infty\}$. A \defn{Whitney $n$-category} $\wcat{A}$ is a presheaf of sets on $\prestrat{n}$ such that the restriction to $\strat{n}$ is a sheaf. A functor between Whitney $n$-categories is a map of presheaves, \ie a natural transformation. Whitney $n$-categories and functors between them form a full subcategory $\whit{n}$ of the presheaves.
\end{definition}
We refer to the elements of $\wcat{A}(X)$ as the \defn{morphisms of shape $X$} or as \defn{$X$-morphisms} of $\wcat{A}$. We also refer to the elements of the set $\wcat{A}(\pt)$ associated to a point  as the objects of $\wcat{A}$.

A Whitney $0$-category is completely determined by the set $\wcat{A}(\pt)$. More precisely, the functor $\wcat{A} \mapsto \wcat{A}(\pt)$ from the category of Whitney $0$-categories to the category of sets is an equivalence. In \S\ref{relation to ordinary categories} we indicate why the category of Whitney $1$-categories is equivalent to the category of small dagger categories and dagger functors. Full details will appear in \cite{Smyth:2012fk}, in which the case $n=2$ is  also treated; here there is an equivalence between (the categories of) one-object Whitney $2$-categories and dagger rigid monoidal categories.

\begin{lemma}
\label{whit is complete}
The category $\whit{n}$ is complete.
\end{lemma}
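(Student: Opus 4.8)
The plan is to exhibit $\whit{n}$ as a subcategory of the presheaf category $\presh{\prestrat{n}}$ that is closed under small limits, and then to invoke the completeness of presheaf categories. Since $\prestrat{n}$ is small, $\presh{\prestrat{n}}$ is complete with limits computed objectwise: given a small diagram $D\colon J \to \whit{n}$, its limit in presheaves is the presheaf $L$ with $L(X) = \lim_{j} D(j)(X)$, the limit on the right being taken in $\sets$. It therefore suffices to show that this $L$ is again a Whitney $n$-category, that is, that its restriction to $\strat{n}$ is a sheaf; the universal property of $L$ in $\presh{\prestrat{n}}$ then restricts to the universal property of a limit inside the full subcategory $\whit{n}$.

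First I would observe that restriction along the inclusion $\strat{n} \hookrightarrow \prestrat{n}$ preserves limits, since both are computed objectwise and the objects of $\strat{n}$ are among those of $\prestrat{n}$. Hence $L|_{\strat{n}} = \lim_{j}\bigl(D(j)|_{\strat{n}}\bigr)$, a limit of sheaves computed in $\presh{\strat{n}}$. The claim thus reduces to the standard fact that the full subcategory of sheaves is closed under limits in presheaves, for which I would give the short direct argument using the sheaf condition (\ref{sheaf condition}).

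Fix a covering sieve $\sieve{S}$ on an object $X$ of $\strat{n}$. The functor $\nat{\sieve{S}}{-}$, being a hom-functor, preserves limits in its second variable, so there is a canonical isomorphism $\nat{\sieve{S}}{L} \cong \lim_{j} \nat{\sieve{S}}{D(j)}$. Each $D(j)|_{\strat{n}}$ is a sheaf, so the comparison map $D(j)(X) \to \nat{\sieve{S}}{D(j)}$ of (\ref{sheaf condition}) is an isomorphism, naturally in $j$. Combining these isomorphisms with $L(X) = \lim_j D(j)(X)$ yields a bijection between $L(X)$ and $\nat{\sieve{S}}{L}$.

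The one point demanding care---and the main, if modest, obstacle---is to check that this bijection is precisely the canonical comparison map $a \mapsto (f \mapsto f^{*}a)$ of (\ref{sheaf condition}) for $L$, rather than merely some abstract isomorphism obtained by transporting limits. This follows from the naturality of the comparison maps with respect to the structure morphisms of $D$ together with the fact that the projections $L \to D(j)$ are morphisms of presheaves, so that restriction along $f$ commutes with passage to the limit. Granting this, (\ref{sheaf condition}) holds for $L|_{\strat{n}}$ and every covering sieve, so $L$ lies in $\whit{n}$ and is the desired limit there, completing the proof.
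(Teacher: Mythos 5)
Your proof is correct and follows the same route as the paper's: compute the limit objectwise in presheaves, note that by fullness it suffices to check the limit is again a Whitney category, and reduce this to the closure of sheaves under limits in presheaves. The paper simply cites this last fact where you unpack it via the limit-preservation of $\nat{\sieve{S}}{-}$, but the argument is the same.
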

\begin{proof}
Limits are computed object-wise, \ie we set
$$
\left(\lim_i \wcat{A}_i\right)(X) = \lim_i \left( \wcat{A}_i(X) \right)
$$
where the right hand limit is computed in $\sets$. The result is a presheaf, indeed it is the limit in the category of presheaves. Using the fact that $\whit{n}$ is a full subcategory, it suffices to show that $\lim_i \wcat{A}_i$ is in fact a Whitney category. That it is follows from the fact that categories of sheaves are complete with the limits being computed object-wise as above.
\end{proof}
We note some consequences. Firstly $\whit{n}$ has fibre products. Secondly $\whit{n}$ is a monoidal category under the cartesian product of Whitney $n$-categories. Completeness is also key to the next result.

\begin{theorem}
The inclusion $\whit{n} \hookrightarrow \presh{\prestrat{n}}$ has a left adjoint $\whitify$. We refer to $\whitify(\sh{A})$ as the \defn{Whitney category associated to $\sh{A}$}. 
\end{theorem}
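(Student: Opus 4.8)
The plan is to present $\whit{n}$ as a reflective localization of the presheaf topos $\presh{\prestrat{n}}$ at a \emph{set} of morphisms, and then to invoke the standard theory of such localizations. Write $\iota\colon \strat{n} \hookrightarrow \prestrat{n}$ for the identity-on-objects inclusion and $\iota^*\colon \presh{\prestrat{n}} \to \presh{\strat{n}}$ for the induced restriction functor, which has a left adjoint $\iota_!$ (left Kan extension, available since both are presheaf categories on small categories). By Definition \ref{wcat defn}, a presheaf $\sh{A}$ lies in $\whit{n}$ exactly when $\iota^*\sh{A}$ is a sheaf, that is, when (\ref{sheaf condition}) is an isomorphism for every object $X$ and every covering sieve $\sieve{S}$ on $X$.

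First I would recast (\ref{sheaf condition}) as an orthogonality condition inside $\presh{\prestrat{n}}$. Regard a covering sieve as a subobject $\sieve{S} \hookrightarrow yX$ of a representable presheaf on $\strat{n}$, and apply $\iota_!$ to obtain a morphism $m_{X,\sieve{S}}\colon \iota_!\sieve{S} \to yX$ in $\presh{\prestrat{n}}$, using the canonical identification $\iota_! yX \cong yX$ (the left Kan extension of a representable is again representable, and $\iota$ fixes objects). The adjunction $\iota_! \dashv \iota^*$ and the Yoneda lemma yield natural bijections $\mor{\iota_!\sieve{S}}{\sh{A}} \cong \nat{\sieve{S}}{\iota^*\sh{A}}$ and $\mor{yX}{\sh{A}} \cong \sh{A}(X)$, under which precomposition with $m_{X,\sieve{S}}$ is precisely the comparison map (\ref{sheaf condition}). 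Consequently $\sh{A}$ is a Whitney category if and only if it is orthogonal to every $m_{X,\sieve{S}}$; in other words $\whit{n} = \Sigma^\perp$, the full subcategory of $\Sigma$-orthogonal objects, where $\Sigma = \{\, m_{X,\sieve{S}} \,\}$.

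The family $\Sigma$ is genuinely a set, because $\strat{n}$ is small and each object carries only a set of covering sieves. Being a presheaf category on a small category, $\presh{\prestrat{n}}$ is a Grothendieck topos, hence locally presentable. For any set of morphisms in a locally presentable category the orthogonal subcategory is reflective --- this is the solvability of the orthogonal-subcategory problem, established by the small object argument (see Ad\'amek--Rosick\'y). The resulting reflector is the sought-after left adjoint $\whitify$, and the components $\sh{A} \to \whitify(\sh{A})$ of its unit are the universal maps.

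The one place that needs genuine care is the second step: one must verify that precomposition with $m_{X,\sieve{S}}$ reproduces \emph{exactly} the map (\ref{sheaf condition}), rather than some abstractly isomorphic arrow, which is a routine but necessary chase through the two adjunction isomorphisms and their naturality in $X$ and $\sieve{S}$. It is worth noting that $\iota_!$ need not preserve the monomorphism $\sieve{S} \hookrightarrow yX$, but this causes no difficulty since orthogonality is defined for arbitrary morphisms. Finally, since Lemma \ref{whit is complete} already gives completeness of $\whit{n}$ and the inclusion preserves limits (they are computed object-wise in both categories), one could instead deduce the adjoint from the Special Adjoint Functor Theorem, the remaining hypotheses of well-poweredness and a cogenerating set being inherited from the topos $\presh{\prestrat{n}}$; I prefer the localization argument because it makes the essential accessibility input explicit.
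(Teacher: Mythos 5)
Your argument is correct, but it is not the route the paper takes. The paper applies Freyd's General Adjoint Functor Theorem: completeness of $\whit{n}$ and continuity of the inclusion come from Lemma \ref{whit is complete}, and the solution-set condition is verified by noting that the quotient presheaves of a fixed $\sh{A}$ form a set (indexed by a subset of $\prod_{X}2^{\sh{A}(X)^2}$) and taking as solution set those quotients which happen to be Whitney categories. Your reflective-localization argument replaces the solution-set step by the identification $\whit{n}=\Sigma^\perp$ for the set $\Sigma=\{m_{X,\sieve{S}}\}$: the translation of (\ref{sheaf condition}) into orthogonality via $\iota_!\dashv\iota^*$ and $\iota_!yX\cong yX$ is exactly right, the caveat that $\iota_!$ need not preserve the monomorphism $\sieve{S}\hookrightarrow yX$ is well taken, and the appeal to the orthogonal-subcategory theorem for locally presentable categories is legitimate since $\prestrat{n}$ and $\strat{n}$ are small. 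Your approach buys two things. First, it gives more: local presentability of $\whit{n}$ and an in-principle explicit transfinite construction of $\whitify$ by the small object argument, which speaks directly to the paper's subsequent Remark lamenting the absence of a concrete construction of the reflector. Second, it sidesteps a delicate point in the paper's own verification: a morphism $\sh{A}\to\wcat{B}$ factors through its image, which is a quotient of $\sh{A}$ but need not itself be a Whitney category (a subpresheaf of a sheaf is separated but not in general a sheaf), so the paper's chosen solution set requires a further word of justification that your argument does not. What the paper's route buys in exchange is brevity and independence from the machinery of accessible categories. One caution on your closing aside: the Special Adjoint Functor Theorem alternative is less immediate than you suggest, since well-poweredness of $\whit{n}$ does transfer (monomorphisms agree because the subcategory is closed under limits), but a cogenerating set is not simply ``inherited'' from the presheaf topos --- a cogenerator of the ambient category need not lie in the subcategory, and one cannot reflect it there before the reflection is known to exist. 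The localization argument you lead with is the right one.
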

\begin{proof}
We use the adjoint functor theorem. Recall that this guarantees the existence of the claimed left adjoint if
\begin{enumerate}
\item $\whit{n}$ is complete;
\item the inclusion $\whit{n} \hookrightarrow \presh{\prestrat{n}}$ is continuous;
\item for each $\sh{A} \in \presh{\prestrat{n}}$ there is a collection $f_i : \sh{A} \to \wcat{B}_i$ of morphisms to Whitney $n$-categories, indexed by a {\em set} $I$, such that any morphism $\sh{A} \to \wcat{B}$ factors through some $f_i$.
\end{enumerate}
The first two conditions follow from Lemma \ref{whit is complete} above. It remains to verify the third condition. To do so we show that the collection of quotient presheaves of $\sh{A}$ forms a set. The required $\{f_i\}$ can then be taken to be the subset of these quotients whose target is a Whitney category. 

A quotient $\sh{A} \to \sh{Q}$ of the presheaf is determined by a (compatible) collection of surjections $\sh{A}(X) \to \sh{Q}(X)$ for each $X \in \prestrat{n}$. The maps $\sh{Q}(Y) \to \sh{Q}(X)$ in the quotient presheaf are completely determined by the corresponding maps in $\sh{A}$. Such surjections are indexed by equivalence relations on the set $\sh{A}(X)$, which we think of as subsets of $\sh{A}(X)^2$. So quotients of $\sh{A}$ can be indexed by a subset of the product of power sets
$$
\prod_{X \in \prestrat{n}} 2^{ \sh{A}(X)^2 }
$$ 
(which exists as a set because $\prestrat{n}$ is small). 
\end{proof}

\begin{remark}
It would be useful to have an actual construction of the left adjoint, perhaps using a modified version of the double plus construction for sheafification. However, the construction of $\whitify$ cannot be exactly like the latter because, in contrast to the plus construction, $\whitify$ cannot preserve finite limits. If it did  then it would follow that $\whit{n}$ was a topos, but this is not the case. For instance we will show in \S\ref{relation to ordinary categories} that $\whit{1}$ is equivalent to the category of dagger categories and functors, and the latter is not a topos (it  has no subobject classifier).
\end{remark}

\begin{corollary}
The category $\whit{n}$ is cocomplete.
\end{corollary}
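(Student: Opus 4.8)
The plan is to use the two facts just established: that $\whit{n}$ is a \emph{reflective} subcategory of the presheaf category $\presh{\prestrat{n}}$, with reflector the left adjoint $\whitify$, and that presheaf categories are cocomplete with colimits computed objectwise in $\sets$ (dually to the objectwise limits of Lemma~\ref{whit is complete}). The standard principle is that a reflective subcategory of a cocomplete category is cocomplete, with colimits obtained by reflecting the colimit formed in the ambient category. So the whole argument reduces to transporting presheaf colimits into $\whit{n}$ along $\whitify$.

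Concretely, write $\iota : \whit{n} \hookrightarrow \presh{\prestrat{n}}$ for the inclusion, so that $\whitify \dashv \iota$. Given a small diagram $D : J \to \whit{n}$, I would first form the colimit $\sh{C} = \colim_J (\iota D)$ in presheaves, which exists by cocompleteness there. I then claim $\whitify(\sh{C})$ is a colimit of $D$ in $\whit{n}$. Since $\whitify$ is a left adjoint it preserves whatever colimits exist in its domain; applied to $\colim_J(\iota D)$ this says $\whitify(\sh{C})$ is a colimit of the diagram $\whitify\iota D$ in $\whit{n}$. Because the inclusion is \emph{full}, the counit $\whitify\iota \Rightarrow \id_{\whit{n}}$ is a natural isomorphism, so $\whitify\iota D \cong D$ and hence $\whitify(\sh{C})$ is a colimit of $D$ itself. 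The cocone maps $D_j \to \whitify(\sh{C})$ are obtained by applying $\whitify$ to the colimit cocone $\iota D_j \to \sh{C}$ and composing with the inverse of the counit.

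To make the universal property explicit, for any Whitney category $\wcat{B}$ one computes
\begin{equation*}
\mor{\whitify(\sh{C})}{\wcat{B}} \cong \mor{\sh{C}}{\iota\wcat{B}} \cong \lim_J \mor{\iota D_j}{\iota\wcat{B}} \cong \lim_J \mor{D_j}{\wcat{B}},
\end{equation*}
naturally in $\wcat{B}$: the first isomorphism is the adjunction, the second is the definition of $\sh{C}$ as a presheaf colimit, and the third is full faithfulness of $\iota$. This exhibits $\whitify(\sh{C})$ as representing the functor of cocones under $D$, which is exactly the colimit. There is no serious obstacle here, since the argument is the routine one that a reflector creates colimits; the only point needing care is the identification of morphisms in $\whit{n}$ with morphisms in $\presh{\prestrat{n}}$, equivalently the fact that the counit is an isomorphism. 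Both rest entirely on $\whit{n}$ being by definition a \emph{full} subcategory of the presheaves, so this causes no difficulty.
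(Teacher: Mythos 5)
Your proposal is correct and follows exactly the paper's argument: the paper likewise forms the colimit in $\presh{\prestrat{n}}$ and applies the reflector $\whitify$, relying on the fact that a reflective subcategory of a cocomplete category is cocomplete. You have merely spelled out the standard details (full faithfulness of the inclusion giving an invertible counit, and the verification of the universal property via the adjunction) that the paper leaves implicit.
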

\begin{proof}
Recall that categories of presheaves are cocomplete (colimits are computed object-wise). It follows that
$$
\colim_i \wcat{A}_i \cong \whitify\left( \colim_i \wcat{A}_i \right)
$$
where the left hand colimit is computed in $\whit{n}$ and the right hand one in $\presh{\prestrat{n}}$.
\end{proof}

\begin{proposition}
\label{loops etc}
Let $\wcat{A}$ be a Whitney $n$-category and $P \in \strat{n}$ with $\dim P =p$. Then the assignments $X \mapsto \wcat{A}(P \times X)$ and 
$$
f \mapsto (\id\times f)^* : \wcat{A}(P\times X) \to \wcat{A}(P\times Y)
$$
define a Whitney $(n-p)$-category which we denote $\wcat{A}^P$.
\end{proposition}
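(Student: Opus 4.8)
The plan is to realise $\wcat A^P$ as the composite of $\wcat A$ with a ``product with $P$'' functor, and then to transfer the presheaf and sheaf properties across.

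First I would construct a functor $P\times(-)\colon\prestrat{n-p}\to\prestrat n$. On objects one checks that $P\times X$ is again a compact cellular stratified space of dimension $\le n$: the product stratification has strata $S\times T$ (for $S$ a stratum of $P$ and $T$ a stratum of $X$), these are products of contractible cells and hence contractible, compactness is clear, $\dim(P\times X)=p+\dim X\le n$, and the product stratification satisfies the Whitney~B condition by the same argument as in Lemma~\ref{fibre prod} applied with $Z=\pt$. On morphisms, if $f\colon X\to Y$ becomes stratified after a subdivision $X'$ of $X$, then $\id_P\times f$ becomes stratified after the subdivision $P\times X'$ of $P\times X$, so $\id_P\times f$ is prestratified; functoriality ($\id_P\times(gf)=(\id_P\times g)(\id_P\times f)$ and $\id_P\times\id=\id$) is immediate. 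Granting that this descends to homotopy classes, $\wcat A^P=\wcat A\circ(P\times-)^{\mathrm{op}}$ is then a presheaf on $\prestrat{n-p}$.

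The substantial point, and what I expect to be the main obstacle, is showing that $[\,f\,]\mapsto[\,\id_P\times f\,]$ is well defined, i.e.\ that $f\sim g$ relative to strata of dimension $<n-p$ forces $(\id_P\times f)^\ast=(\id_P\times g)^\ast$ on $\wcat A$. The obvious candidate is the product homotopy $\id_P\times h$, where $h$ witnesses $f\sim g$. Each slice is prestratified, and $\id_P\times h$ is constant in the homotopy variable on a stratum $S\times T$ as soon as $h$ is constant on $T$, which the hypothesis guarantees when $\dim T<n-p$. This accounts for every stratum of $P\times X$ of dimension $<n$ \emph{except} those of the form $S\times A$ with $S$ a proper stratum of $P$ ($\dim S<p$) and $A$ a top stratum of $X$ ($\dim A=n-p$): here $\dim(S\times A)<n$, yet the product homotopy genuinely moves. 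Thus the product homotopy is not on its own a homotopy relative to strata of dimension $<n$, and the naive argument fails exactly on these ``mixed'' strata. To get around this I would use essentially that $\wcat A$ is a \emph{sheaf}, not merely a presheaf: by the sheaf condition it suffices to check $(\id_P\times f)^\ast a=(\id_P\times g)^\ast a$ after restriction along a covering sieve of $P\times X$, and I would induct over the strata of $P$ in order of decreasing dimension. On the top stratum of $P$ (dimension $p$) the offending strata do not arise and the product homotopy is legitimate; for a proper stratum $S$ one exploits the local conical structure of $P$ near $S$ (a neighbourhood of which looks like $S\times$ the cone on its link) so that the value of $\wcat A$ on a chart around $S\times A$ is pinned down, via the matching families, by its values on the adjacent higher strata $S'\times A$ with $S\subset\overline{S'}$, where the homotopy is already admissible. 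This is the delicate step and the place where the full strength of the sheaf axiom is needed.

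Finally, functoriality yields the presheaf $\wcat A^P$, and for the sheaf condition I would argue that $P\times(-)$ carries covering sieves to covering sieves. If $f\colon W\to X$ trivially covers a stratum $A$ of $X$, then $\id_P\times f$ trivially covers each stratum $S\times A$ of $P\times X$, since $(\id_P\times f)^{-1}(S\times A)=S\times f^{-1}A$ is a single stratum mapped diffeomorphically. Hence a covering sieve $\sieve S$ of $X$ generates a covering sieve of $P\times X$, and matching families for $\wcat A^P$ over $\sieve S$ correspond to matching families for $\wcat A$ over this generated sieve; the sheaf property of $\wcat A$ then supplies unique amalgamations for $\wcat A^P$, so its restriction to $\strat{n-p}$ is a sheaf.
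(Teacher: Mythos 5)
Your treatment of the sheaf condition is where the main gap lies, and it is precisely the point on which the paper's proof spends essentially all of its effort. You assert that ``matching families for $\wcat{A}^P$ over $\sieve{S}$ correspond to matching families for $\wcat{A}$ over the generated sieve''. But a matching family for $\wcat{A}^P$ over $\sieve{S}$ only hands you elements $a_i\in\wcat{A}(P\times X_i)$ compatible along the \emph{generating} maps $\id\times f_i$; to invoke the sheaf property of $\wcat{A}$ you must extend this to a matching family over the whole sieve these generate, i.e.\ show that whenever some $W$ maps to $P\times X_i$ and to $P\times X_j$ over $P\times X$ (not necessarily via maps of the form $\id\times(\cdot)$), the two pullbacks of $a_i$ and $a_j$ to $W$ agree. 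This is not formal, because the stratified fibre product of Lemma \ref{fibre prod} is not a categorical fibre product, so $W$ need not factor through $(P\times X_i)\times_{P\times X}(P\times X_j)$. The paper's proof is devoted to exactly this step: it forms $W_k=(P\times X_k)\times_{P\times X}W$, observes that the resulting maps $h_k:W_k\to W$ generate a covering sieve of $W$, and uses the sheaf property of $\wcat{A}$ \emph{on $W$} to conclude that both pullbacks agree because each restricts on $W_k$ to the pullback of $a_k$. You need to supply this argument.

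On the presheaf side you have, to your credit, isolated a subtlety that the paper dismisses as ``clear'': the product homotopy $\id_P\times h$ is not relative to the mixed strata $S\times A$ with $\dim S<p$ and $\dim A=n-p$, so $[f]=[g]$ in $\prestrat{n-p}$ does not obviously force $[\id_P\times f]=[\id_P\times g]$ in $\prestrat{n}$. But your proposed repair cannot work as described. The topology of \S\ref{topology} demands a trivially covering map for \emph{every} stratum, and there is no mechanism by which a matching family's value over a low-dimensional stratum is ``pinned down'' by the adjacent higher strata. More decisively: if $f,g:X\to X$ agree on the lower strata but differ pointwise on a top-dimensional stratum $A$, then $\id_P\times f$ and $\id_P\times g$ differ pointwise on the stratum $S\times A$, which has dimension $<n$, and hence represent \emph{distinct} morphisms of $\prestrat{n}$; taking $\wcat{A}=\rep{P\times X}$ and pulling back the identity then yields genuinely different elements. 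So no covering-sieve argument internal to $\wcat{A}$ can identify the two pullbacks without extra hypotheses; in the paper's actual applications of this proposition ($\Omega^k\wcat{A}$ and $\wcat{A}(a,a')$) it is the imposed boundary conditions on the low-dimensional strata of $P$ that rescue well-definedness. In short, your instinct that something here needs proof is sound, your proof of it is not, and you must in any case still supply the fibre-product argument for the sheaf condition, which is the actual content of the paper's proof.
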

\begin{proof}
It is clear that $\wcat{A}^P$ is a presheaf on $\prestrat{n-p}$, so we need only check that it restricts to a sheaf on $\strat{n-p}$. Let $\{ f_i: X_i \to X \}_{i \in I}$ be a covering sieve for $X$ in $\strat{n-p}$. Then 
\begin{equation}
\label{product family}
\{ f_i \times \id : P \times X_i \to P \times X \}_{i\in I}
\end{equation}
generates a covering sieve for $P \times X$ in $\strat{n}$ whose elements are the stratified maps to $P \times X$ factoring through one of these. The presheaf $\wcat{A}^P$ is a Whitney category if each matching family for (\ref{product family}) extends to a matching family for the sieve which it generates. In other words we must check that whenever we have a commuting diagram
$$
\xymatrix{
W \ar[d]_{g_j} \ar[r]^{g_i} & P\times X_i \ar[d]^{\id \times f_i}\\
P\times X_j \ar[r]_{\id \times f_j} & P \times X
}
$$
and a matching family  $\{a_i \in \wcat{A}(P\times X_i) \}_{i\in I}$ for the maps in (\ref{product family}) that $g_i^*a_i = g_j^*a_j$. Since $\wcat{A}$ is a Whitney category it suffices to show that $ h^*g_i^*a_i = h^*g_j^*a_j$ for all $h$ in some covering sieve of $W$. We construct a covering sieve with this property as follows. For each stratum $S_k \subset W$ there is an image stratum in $P\times X$ and a map  $\id \times f_k :  P \times X_k \to P \times X$ in (\ref{product family}) trivially covering it. Consider the commuting diagram
$$
\xymatrix{
W_k \ar[rr] \ar[dd] \ar[dr]^{h_k} && P\times (X_i \times_X X_k)\ar'[d][dd] \ar[dr] & \\
& W \ar[rr]^<<<<<<<<{g_i} \ar[dd]_<<<<<<<<{g_j} && P \times X_i \ar[dd]^{\id \times f_i} \\
P\times (X_j \times_X X_k) \ar'[r][rr] \ar[dr] && P\times X_k \ar[dr]^{\id \times f_k} &\\
& P \times X_j \ar[rr]_{\id \times f_j} && P \times X
}
$$
where we set $W_k = (P \times X_k)\times_{P\times X} W $ and stratify the fibre products as in Lemma \ref{fibre prod}. (To be precise, for the diagram to exist in $\strat{n}$ we must expunge any strata of dimension $>n$. But this does not effect the argument.) By construction $h_k: W_k\to W$ trivially covers the stratum $S_k$. The collection of the $h_k$ for all strata $S_k$ in $W$ thus generates a covering sieve of $W$, namely all those maps to $W$ which factor through one of the $h_k$. Since we have a matching family for the maps in (\ref{product family}) it follows from the diagram that both $h_k^*g_i^*a_i$ and $h_k^*g_j^*a_j$ agree with the pullback of $a_k$ from $\wcat{A}(P \times X_k)$, and so they are equal. Therefore they agree on the covering sieve for $W$ constructed above, and so $g_i^*a_i = g_j^*a_j$ as required.
\end{proof}
\begin{corollary}
Let $\wcat{A}$ be a Whitney $n$-category and fix objects $a,a' \in \wcat{A}(\pt)$. The assignment
$$
X \mapsto \{ \alpha \in \sh{A}(X \times \cube{}) \ : \ \imath_0^*\alpha = p^*a, \imath_1^*\alpha = p^*a' \},
$$
where $\imath_t : X \times t \hookrightarrow X \times \cube{}$ is the inclusion and $p:X \to \pt$ the map to a point, defines a Whitney $(n-1)$-category $\wcat{A}(a,a')$. 
\end{corollary}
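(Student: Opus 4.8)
The plan is to realise $\wcat{A}(a,a')$ as a sub-presheaf of the Whitney $(n-1)$-category $\wcat{A}^{\cube{}}$ produced by Proposition \ref{loops etc}. Applying that proposition with $P=\cube{}$, and using the evident isomorphism $X\times\cube{}\cong\cube{}\times X$ in $\prestrat{n}$, shows that the assignments $X\mapsto\wcat{A}(X\times\cube{})$ and $f\mapsto(f\times\id)^*$ define a Whitney $(n-1)$-category, which I denote $\wcat{A}^{\cube{}}$; note that $\dim(X\times\cube{})\leq n$, so this is a legitimate object on which to evaluate $\wcat{A}$. By construction $\wcat{A}(a,a')(X)\subseteq\wcat{A}^{\cube{}}(X)$ for every $X\in\prestrat{n-1}$, so it remains only to check that this collection of subsets is a sub-presheaf and that its restriction to $\strat{n-1}$ is still a sheaf.

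First I would verify that the restriction maps $(f\times\id)^*$ preserve the two boundary conditions, so that we genuinely have a sub-presheaf. Writing $\imath_t^X:X\cong X\times t\hookrightarrow X\times\cube{}$ and $p_X:X\to\pt$, the key identity is $(f\times\id)\circ\imath_t^Y=\imath_t^X\circ f$, which gives $(\imath_t^Y)^*(f\times\id)^*=f^*(\imath_t^X)^*$. Combined with $p_X\circ f=p_Y$ this yields $(\imath_0^Y)^*(f\times\id)^*\alpha=f^*p_X^*a=p_Y^*a$ whenever $\alpha\in\wcat{A}(a,a')(X)$, and similarly at $t=1$, so $(f\times\id)^*\alpha$ again lies in $\wcat{A}(a,a')(Y)$.

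The substance of the proof is the sheaf condition on $\strat{n-1}$. Let $\sieve{S}$ be a covering sieve on $X\in\strat{n-1}$ and let $\{\alpha_f\in\wcat{A}(a,a')(Y_f)\}_{f\in\sieve{S}}$ be a matching family. Since these elements also form a matching family in the sheaf $\wcat{A}^{\cube{}}$, they have a unique amalgamation $\alpha\in\wcat{A}(X\times\cube{})$ with $(f\times\id)^*\alpha=\alpha_f$. The whole content is to show that $\alpha$ itself satisfies the boundary conditions, for then it is the required amalgamation in $\wcat{A}(a,a')$, and it is unique because amalgamations are already unique in $\wcat{A}^{\cube{}}$.

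The one real subtlety is that the boundary conditions are equations in $\wcat{A}(X)$, one dimension up, and so must be checked locally on the cover rather than directly. Here I would invoke the separatedness of $\wcat{A}$ itself: the two elements $(\imath_0^X)^*\alpha$ and $p_X^*a$ of $\wcat{A}(X)$ satisfy, for every $f:Y\to X$ in $\sieve{S}$,
$$
f^*(\imath_0^X)^*\alpha=(\imath_0^Y)^*(f\times\id)^*\alpha=(\imath_0^Y)^*\alpha_f=p_Y^*a=f^*p_X^*a,
$$
using the defining property of $\alpha_f\in\wcat{A}(a,a')(Y)$ at the middle step. Since $X$ has dimension $\leq n$ it is an object of $\strat{n}$ and the maps of $\sieve{S}$ generate a covering sieve there; as $\wcat{A}$ is a sheaf on $\strat{n}$ it is in particular separated, so agreement along this covering sieve forces $(\imath_0^X)^*\alpha=p_X^*a$. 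The identical argument at the other endpoint gives $(\imath_1^X)^*\alpha=p_X^*a'$, whence $\alpha\in\wcat{A}(a,a')(X)$, completing the verification that $\wcat{A}(a,a')$ is a sheaf on $\strat{n-1}$ and hence a Whitney $(n-1)$-category.
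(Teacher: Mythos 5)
Your proof is correct and follows essentially the same route as the paper: it exhibits $\wcat{A}(a,a')$ as a sub-presheaf of $\wcat{A}^{\cube{}}$ (Proposition \ref{loops etc} with $P=\cube{}$), amalgamates a matching family there, and then checks that the amalgamation still satisfies the boundary conditions. The only difference is that you spell out the last step --- which the paper asserts in one sentence --- via the separatedness of $\wcat{A}$ on the covering sieve generated in $\strat{n}$, which is exactly the right justification.
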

\begin{proof}
This is a special case of the proof of Proposition \ref{loops etc} above, with $P=\cube{}$, except that we now have boundary conditions. That is we are working with the sub-presheaf of $\wcat{A}^{\cube{}}$ consisting of elements $\alpha$ satisfying $\imath_0^*\alpha = p^*a$ and $\imath_1^*\alpha = p^*b$. Since the amalgamation of a matching family of elements with this property also has this property the argument goes through as before.
\end{proof}
We refer to $\wcat{A}(a,a')$ as the \defn{Whitney category of morphisms} from $a$ to $a'$. The construction is functorial: given $F : \wcat{A} \to \wcat{B}$ and $a,a'\in  \wcat{A}(\pt)$ there is an induced morphism 
$$
F(a,a') : \wcat{A}(a,a') \to \wcat{B}(Fa,Fa')
$$ 
of Whitney $(n-1)$-categories.

In order to compare Whitney $n$-categories we need a notion of equivalence. We model it on the following symmetric version of equivalence of ordinary small categories: an equivalence of $\cat{A}$ and $\cat{B}$ is given by a span 
$$
\xymatrix{
\wcat{A}  & \wcat{C} \ar[l]_F \ar[r]^G & \wcat{B}
}
$$
in which the functors $F$ and $G$ are fully-faithful and surjective (not merely {\em essentially} surjective) on objects. We use this to make the following inductive definition.
\begin{definition}
\label{equivalent}
For $n >0$ an \defn{$n$-equivalence} of Whitney $n$-categories is a span
$$
\xymatrix{
\wcat{A}& \wcat{C} \ar[l]_F \ar[r]^G & \wcat{B}
}
$$
of functors which are surjective on objects, \ie the maps $F(\pt): \wcat{C}(\pt) \to \wcat{A}(\pt)$ and $G(\pt):\wcat{C}(\pt) \to \wcat{B}(\pt)$ are surjective, and which induce $(n-1)$-equivalences
$$
\xymatrix{
\wcat{A}(Fc,Fc')  & \wcat{C}(c,c') \ar[l]_{\quad F} \ar[r]^{G\quad } & \wcat{B}(Gc,Gc').
}
$$
for each pair $c,c'\in \wcat{C}(\pt)$. A $0$-equivalence is a span such that $F(\pt)$ and $G(\pt)$ are bijections. 
\end{definition}
\begin{proposition}
The notion of $n$-equivalence is an equivalence relation on Whitney $n$-categories. 
\end{proposition}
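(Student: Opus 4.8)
The plan is to verify reflexivity, symmetry and transitivity by a single induction on $n$, with a $0$-equivalence (a span of bijections on objects) as the base case. Reflexivity and symmetry are formal. For reflexivity one uses the span $\wcat{A}\xleftarrow{\id}\wcat{A}\xrightarrow{\id}\wcat{A}$: the maps $\id(\pt)$ are bijective, and since the construction $a,a'\mapsto\wcat{A}(a,a')$ is functorial with $\id(a,a')=\id$, the induced span on each morphism category is again an identity span, which is an $(n-1)$-equivalence by the inductive hypothesis. Symmetry is immediate from the symmetric shape of Definition \ref{equivalent}: reversing a span $\wcat{A}\xleftarrow{F}\wcat{C}\xrightarrow{G}\wcat{B}$ to $\wcat{B}\xleftarrow{G}\wcat{C}\xrightarrow{F}\wcat{A}$ preserves the surjectivity condition, and each induced span on morphisms is the reverse of an $(n-1)$-equivalence, hence an $(n-1)$-equivalence by the inductive hypothesis.

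The content lies in transitivity. Given $n$-equivalences $\wcat{A}\xleftarrow{F}\wcat{C}\xrightarrow{G}\wcat{B}$ and $\wcat{B}\xleftarrow{F'}\wcat{C'}\xrightarrow{G'}\wcat{D}$, I would form the fibre product $\wcat{C}\times_{\wcat{B}}\wcat{C'}$, which exists and is computed objectwise since $\whit{n}$ has fibre products, and take the composite span
$$
\wcat{A}\xleftarrow{F\pi}\wcat{C}\times_{\wcat{B}}\wcat{C'}\xrightarrow{G'\pi'}\wcat{D},
$$
where $\pi,\pi'$ are the two projections. Surjectivity on objects is elementary: given $a\in\wcat{A}(\pt)$, choose $c$ with $Fc=a$ and then $c'$ with $F'c'=Gc$ (possible because $F$ and $F'$ are surjective on objects), so that $(c,c')$ is an object of the fibre product mapping to $a$; the argument on the $\wcat{D}$ side is symmetric.

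The crucial step is that the morphism-category construction commutes with fibre products. Because fibre products in $\whit{n}$, and the sub-presheaf-with-boundary-conditions defining $\wcat{A}(a,a')$, are all computed objectwise as limits in $\sets$, one obtains a natural isomorphism of Whitney $(n-1)$-categories
$$
\bigl(\wcat{C}\times_{\wcat{B}}\wcat{C'}\bigr)\bigl((c_1,c_1'),(c_2,c_2')\bigr)\cong\wcat{C}(c_1,c_2)\times_{\wcat{B}(Gc_1,Gc_2)}\wcat{C'}(c_1',c_2'),
$$
using that $Gc_i=F'c_i'$ for objects of the fibre product. Under this identification the span induced on morphism categories by the composite is precisely the fibre-product composite of the two $(n-1)$-equivalences $\wcat{A}(Fc_1,Fc_2)\leftarrow\wcat{C}(c_1,c_2)\rightarrow\wcat{B}(Gc_1,Gc_2)$ and $\wcat{B}(Gc_1,Gc_2)\leftarrow\wcat{C'}(c_1',c_2')\rightarrow\wcat{D}(G'c_1',G'c_2')$. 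Transitivity at level $n-1$, part of the inductive hypothesis, then shows this induced span is an $(n-1)$-equivalence, so the composite span is an $n$-equivalence. The base case $n=0$ is trivial: the fibre product of two bijective spans is again bijective.

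I expect the main obstacle to be the commutation isomorphism above. Although it is \emph{morally} objectwise, one must check that the boundary conditions imposed at $\imath_0$ and $\imath_1$ interact correctly with the defining condition of the fibre product, and---more importantly---that the resulting bijections for each shape $X$ assemble into an isomorphism of Whitney $(n-1)$-categories rather than a mere family of set bijections.
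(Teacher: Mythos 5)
Your proposal is correct and follows essentially the same route as the paper: transitivity via the objectwise fibre product of spans, surjectivity on objects by a direct diagram chase, and the identification $(\wcat{C}\times_{\wcat{B}}\wcat{C'})\bigl((c_1,c_1'),(c_2,c_2')\bigr)\cong\wcat{C}(c_1,c_2)\times_{\wcat{B}(Gc_1,Gc_2)}\wcat{C'}(c_1',c_2')$ feeding the inductive hypothesis at level $n-1$. The paper states this commutation isomorphism without proof, so your closing remark about checking it carefully is a reasonable refinement rather than a divergence.
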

\begin{proof}
Reflexivity and symmetry are immediate. Transitivity follows from the fact that we can compose spans using the fibre product:
$$
\xymatrix{ 
&& \wcat{D}\times_\wcat{B} \wcat{E} \ar@{-->}[dl] \ar@{-->}[dr] \\
& \wcat{D} \ar[dl] \ar[dr] && \wcat{E} \ar[dl] \ar[dr]  \\
\wcat{A} && \wcat{B} && \wcat{C}.
}
$$
We claim that the outer roof is an equivalence whenever the inner roofs are equivalences. We use induction on $n$. The base case $n=0$ is clear. Assume the result holds for $(n-1)$-equivalences. Suppose we are given a diagram as above. Evaluating at a point the solid arrows are, by assumption, surjective. Hence so are the dotted ones. Therefore the induced maps $(\wcat{D}\times_\wcat{B} \wcat{E})(\pt) \to \wcat{A}(\pt), \wcat{C}(\pt)$ are surjective. For any objects $(d,e)$ and $(d',e')$ in $(\wcat{D}\times_\wcat{B} \wcat{E})(\pt)$ there is an induced diagram of morphism categories. Using the fact that
$$
(\wcat{D}\times_\wcat{B} \wcat{E})\left( (d,e) , (d',e') \right) \cong \wcat{D}(d,d') \times_{\wcat{B}(b,b')} \wcat{E}(e,e') 
$$
and the inductive hypothesis we deduce that the outer span of the diagram of morphism categories is an $(n-1)$-equivalence. Therefore the outer span of the original diagram is an $n$-equivalence.
\end{proof}

\subsection{Monoidal Whitney categories}
\label{monoidal whitney categories}
Recall that a category with one object is a monoid, that a bicategory with one object is a monoidal category and that a bicategory with one object and one morphism is a commutative monoid. (The latter follows from the fact that if a set has two monoid structures $\star$ and $\ast$ with the distributive property
$$
(a \star b) \ast (c \star d) = (a\ast c) \star (b \ast d)
$$
then $\star$ and $\ast$ agree, and are commutative.) By analogy we define a \defn{$k$-tuply monoidal Whitney $n$-category} to be a Whitney $(n+k)$-category $\wcat{A}$ for which $\wcat{A}(X) = \unit$ is a one element set whenever $\dim X < k$. 

We can obtain a {\it bona fide} Whitney $n$-category from a $k$-tuply monoidal one as follows. Let $\Omega^k\wcat{A}$ be the Whitney $n$-category
$$
Y \mapsto  \{ a\in \wcat{A}\left(\sphere{k} \times Y\right) \ : \ a|_{0 \times Y} = 1 \}
$$
where we write $1$ for the pullback of the unique element in $\wcat{A}(\pt)$ under the unique map to a point. The proof that this is a Whitney category is the special case $P=\sphere{k}$ of Proposition \ref{loops etc}, but with an added `boundary' condition (which does not effect the argument). 

One can define a monoidal structure on $\Omega^k\wcat{A}$  by choosing a prestratified map $\mu: \sphere{k} \to \sphere{k}\vee \sphere{k}$ (the wedge of the spheres identifying the point strata) which is degree one onto each lobe. There is then a unique dotted map such that  
$$
\xymatrix{
\Omega^k\wcat{A}(Y)^2 \ar[d] \ar@{-->}[rr] && \Omega^k\wcat{A}(Y) \ar[d] \\
\wcat{A}(\sphere{k}\times Y) \times_{\wcat{A}(Y)} \wcat{A}(\sphere{k}\times Y) \ar@{=}[r]  & \wcat{A}\left((\sphere{k}\vee \sphere{k})\times Y \right) \ar[r]_{\ \ \ \ (\mu\times\id)^*} & \wcat{A}(\sphere{k} \times Y)
}
$$
commutes. Uniqueness follows because the vertical maps are injections. The identification in the bottom row comes from the fact that $\wcat{A}$ is a sheaf on $\strat{n}$. This defines a binary operation $\Omega^k\wcat{A} \times \Omega^k\wcat{A} \to \Omega^k\wcat{A}$. The distinguished element $1 \in\Omega^k\wcat{A}(Y)$ acts as a (weak) unit. 

However, we prefer to consider $k$-tuply monoidal $n$-categories as special $(n+k)$-categories rather than $n$-categories with additional structure. This has the virtue that a monoidal functor is then simply a functor between $(n+k)$-categories, rather than a functor obeying an extra condition. 

\subsection{Examples}
\label{examples}

In this section we discuss three different classes of  examples of Whitney $n$-categories. 

\subsubsection{Representable Whitney categories}
\label{representables}

For any stratified space $X$ there is a representable Whitney $n$-category $\rep{X}$ given by the presheaf
$$
\rep{X} = \prestrat{n}( -, X)
$$
The only thing to verify is that this restricts to a sheaf on $\strat{n}$. Suppose $\sieve{S}$ is a covering sieve on $Y$. The canonical map
$$
\rep{X}(Y) \to \nat{\sieve{S}}{\rep{X}} : f \mapsto ( g \mapsto f\circ g)
$$
is injective: If the classes of $f, f': Y \to X$ differ as elements of $\rep{X}(Y)$ then their restrictions to some stratum $A\subset Y$ differ, and choosing  $g: Z \to Y$ in the sieve trivially covering $A$ the classes of  $f\circ g$ and $f' \circ g$ differ. 

The canonical map is also surjective: Fix an element of $\nat{\sieve{S}}{\rep{X}}$, \ie a compatible family $\{f_g\}$ of prestratified maps $f_g: Z \to X$ for each $g: Z \to Y$ in the sieve $\sieve{S}$. For each stratum $A\subset Y$ we can define a prestratified map $f_A: A \to X$ by choosing $g: Z \to Y$ trivially covering $A$ and considering the composite
$$
f_A = f_g \circ \left( g|_{g^{-1}A} \right)^{-1}  : A \to g^{-1}A \to X.
$$
Compatibly of the family implies that $f_A$ is independent of the choice of such $g$. Together with the fact that we work with germs of smooth maps it also means that the $f_A$ patch together to form a smooth map $f: Y \to X$. Moreover, $f$ is prestratified because this condition can be checked stratum-by-stratum. Surjectivity follows since, by construction, $f \mapsto \{ f_g\}$.
\begin{remark}
\label{colimit of representables}
Any presheaf is a colimit of representable presheaves, in fact has a canonical such description, see for example \cite[p40]{MR1300636}. Since the left adjoint $\whitify$ to the inclusion $\whit{n} \hookrightarrow \presh{\prestrat{n}}$ preserves colimits it follows that any Whitney category is a colimit of representable ones, again in a canonical way.
\end{remark}

\subsubsection{Framed tangles}
\label{tangles}

Given  $k, n\in \N$ we define a Whitney $(n+k)$-category of framed tangles by setting
$$
\tang{n}{k}(X) = \left\{ 
\begin{array}{c} 
\textrm{Germs of codimension $k$ framed submanifolds } \\
\textrm{$T \subset X$ transverse to each stratum} 
\end{array}
\right\} \big/ \sim.
$$
The equivalence relation $\sim$ is given by ambient isotopy, relative to all strata of dimension strictly less than $n+k$. By a framing we mean a homotopy class of trivialisations of the normal bundle, where the homotopy is the identity over intersections of the tangle with strata of dimension $<n+k$. In particular if $\dim X < n+k$ then a framing is a fixed trivialisation. 
\begin{remark}
In the `classical' case of $1$-dimensional tangles in $3$-dimensional space, corresponding to $n=1$ and $k=2$, the adjective framed is more commonly used in the knot theorists' sense of a chosen non-vanishing section of the normal bundle. In these terms, what we call a framed tangle would be instead a framed and oriented tangle. Despite the unfortunate clash of terminology, we use the topologists' notion of framing since it generalises appropriately to higher dimensions.
\end{remark}

To complete the definition we need to specify the map induced by prestratified $f: X \to Y$. We define
$$
f^* : \tang{n}{k}(Y)  \to \tang{n}{k}(X) : T \mapsto f^{-1}T.
$$
Since $f$ is prestratified and $T$ transversal to all strata of $Y$ the pre-image $f^{-1}T$ is a submanifold of codimension $k$ in $X$, also transversal to all strata. It inherits a framing given by the isomorphisms 
$$
N\left(f^{-1}T\right) \cong f^*NT \cong f^*\left(T \times \R^k\right) \cong f^{-1}T \times \R^k.
$$
Homotopic maps give rise to isotopic framed submanifolds, hence $f^*$ is well-defined. The verification that this restricts to a sheaf is similar to the case of representable presheaves. A matching family of germs of framed submanifolds of each stratum  amalgamates to form a germ of a framed submanifold of the entire stratified space.

Note that $\tang{n}{k}$ is a $k$-tuply monoidal $n$-category, since only the empty codimension $k$ submanifold is transversal to the strata of a space $X$ with $\dim X < k$.

As an example, in \S\ref{relation to ordinary categories}, we explain how to recover a more familiar version of the category of framed tangles in the case $n=1$ and $k=2$.

\subsubsection{Transversal homotopy theory}
\label{transversal homotopy theory}
Let $M$ be a Whitney stratified manifold with a generic basepoint $p$, \ie $p$ lies in some open stratum of $M$. We define `transversal homotopy Whitney categories' of $M$ built out of maps into $M$ which are transversal to all strata. To be precise a smooth map $g:X\to M$ from a stratified space into $M$ is \defn{transversal to all strata of $M$} if for each stratum of $S\subset X$ the restriction $g|_S$ is transversal to the inclusion of each stratum of $M$. 

For each $k,n \in \N$ we associate a Whitney $(n+k)$-category $\thcat{k,n+k}{M}$ to $M$ by defining
$$
\thcat{k,n+k}{M}(X) = 
\left\{ 
\begin{array}{c} 
\textrm{Transversal $g: X \to M$ such that whenever } \\
\textrm{$S \subset X$ and $\dim S < k$ then $S \subset g^{-1}(p)$}
\end{array}
\right\} \big/ \sim.
$$
Here $\sim$ is the equivalence relation given by homotopy through transversal maps relative to all strata $S\subset X$ with $\dim S < n+k$. Write $[g]$ for the class of $g: X \to M$.

Given prestratified $f: X \to Y$ we define $f^*[g] = [g \circ f]$.  Then $g\circ f$ is transversal to all strata of $M$ and $[g\circ f]$ depends only on the morphism in $\prestrat{n+k}$ represented by $f$. The verification that this restricts to a sheaf on $\strat{n+k}$ is similar to that for representable presheaves. The condition that $g(S)=p$ whenever $\dim S < k$ means that this is a $k$-tuply monoidal Whitney $n$-category.

Transversal homotopy Whitney categories are functorial for sufficiently nice maps  between Whitney stratified manifolds. Specifically, they are functorial for  \defn{weakly stratified normal submersions} $h:M\to N$, \ie weakly stratified maps such that the induced mappings  $N_pS \to N_{h(p)}h(S)$ of normal spaces to strata are always surjective.\footnote{Such maps were termed `stratified normal submersions' in \cite{MR2720181} where the notion of stratified map was weaker.} Whenever  $h:M \to N$ is a weakly stratified normal submersion and $g: X \to M$ is transversal then the composite $h \circ g : X \to N$  is transversal. So we can define a map
$$
\thcat{k,n+k}{M}(X) \to \thcat{k,n+k}{N}(X) :  [g] \mapsto [h\circ g].
$$
Since composition on the left and right commute this specifies a natural transformation of presheaves, \ie  a functor $\thcat{k,n+k}{M} \to \thcat{k,n+k}{N}$ of Whitney categories. 

In the case $n=0$ one can recover the transversal homotopy monoids $\psi_k(M)$ defined in \cite{MR2720181} by considering the associated Whitney $0$-category $\Omega^k\thcat{k,k}{M}$. This is completely determined by its set of objects
$$
\Omega^k\thcat{k,k}{M}(\pt) = \thcat{k,k}{M}(\sphere{k}),
$$
\ie  the set of homotopy classes of based transversal maps $\sphere{k} \to M$. This is the underlying set of the dagger monoid $\psi_k(M)$. The monoid structure can be recovered by the procedure outlined in \S\ref{monoidal whitney categories}, and the dagger structure from the map induced by a reflection of $\sphere{k}$. In \S\ref{relation to ordinary categories} we will sketch the analogous relation for $n=1$ to the transversal homotopy categories defined in \cite{MR2720181}.

\subsection{Relation to `ordinary' categories}
\label{relation to ordinary categories}

On the face of it the definition of Whitney $n$-category seems rather remote from `ordinary' category theory. In order for our definition of Whitney $n$-category to be a reasonable notion of `$n$-category with duals' it should agree with the accepted definitions for small $n$. The case $n=0$ is rather trivial, as $0$-categories with duals and Whitney $0$-categories are both simply sets. In this section we discuss the more interesting $n=1$ case. We sketch constructions producing a small dagger category from a Whitney $1$-category and {\it vice versa}. These are functorial and induce equivalences between the category $\whit{1}$ of Whitney $1$-categories and the category of small dagger categories and dagger functors. Full details  will appear in \cite{Smyth:2012fk}. The case $n=2$ in which there is a close relation between one-object Whitney $2$-categories and rigid dagger categories will also be addressed there.

\subsubsection{Whitney $1$-categories}
\label{n=1}

Let $\dagcat$ be the category of small dagger categories and dagger functors between them, \ie functors which commute with the dagger duals.
\begin{theorem}
There are functors 
$$
\xymatrix{
\whit{1} \ar@/^1pc/[r]^{\ \mathcal{D}} &  
\dagcat \ar@/^1pc/[l]^{\ \mathcal{W}}
}
$$
giving an equivalence of categories. 
\end{theorem}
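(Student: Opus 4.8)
The plan is to construct the two functors explicitly and then exhibit natural isomorphisms $\mathcal{D}\mathcal{W} \cong \id_{\dagcat}$ and $\mathcal{W}\mathcal{D} \cong \id_{\whit{1}}$.

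First I would build $\mathcal{D}$, which is the easy direction. Given a Whitney $1$-category $\wcat{A}$, let the objects of $\mathcal{D}(\wcat{A})$ be the set $\wcat{A}(\pt)$ and, for objects $a,a'$, let the hom-set be the Whitney $0$-category (\ie set) $\wcat{A}(a,a')$ of the preceding Corollary, namely those $\alpha \in \wcat{A}(\cube{})$ restricting to $a$ and $a'$ at the two endpoints. The identity on $a$ is $p^*a$ for the collapse $p: \cube{} \to \pt$; composition is induced by the prestratified subdivision map $\cube{} \to \cube{}'$ (the identity on underlying spaces, where $\cube{}'$ carries an extra interior $0$-stratum), using the sheaf condition to identify $\wcat{A}(\cube{}')$ with the set of composable pairs $\wcat{A}(\cube{}) \times_{\wcat{A}(\pt)} \wcat{A}(\cube{})$; and the dagger is induced by the reflection $r: \cube{} \to \cube{}$, $t \mapsto 1-t$. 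The dagger-category axioms then follow formally: involutivity of the dagger from $r^2 = \id$, its compatibility with identities from $p \circ r = p$, the contravariance $(\beta\alpha)^\dagger = \alpha^\dagger\beta^\dagger$ from the fact that $r$ reverses the subdivision, and associativity and the unit laws from the compatibilities between the various subdivisions of $\cube{}$. Functoriality of $\mathcal{D}$ is immediate, since a natural transformation $\wcat{A} \to \wcat{B}$ commutes with all the pullback maps used above and hence is a dagger functor.

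Next I would construct $\mathcal{W}$. An object $X$ of $\prestrat{1}$ is a finite graph: its $0$-strata are vertices and its $1$-strata are contractible open edges (possibly loops, as for $\sphere{1}$). Given a dagger category $\cat{C}$, define $\mathcal{W}(\cat{C})(X)$ to be the set of \emph{$\cat{C}$-colourings} of $X$: an assignment of an object of $\cat{C}$ to each vertex and, to each edge after a choice of orientation, a morphism between the objects at its endpoints, the opposite orientation carrying the dagger morphism. The pullback $f^*$ along a prestratified map $f : X \to Y$ is defined by first subdividing $X$ so that $f$ is stratified, then labelling each collapsed edge by an identity and each edge mapping submersively onto an edge by the corresponding morphism label (applying the dagger when the orientation is reversed), and finally composing along the subdivision; the dagger-category structure of $\cat{C}$ is exactly what makes this well-defined on homotopy classes and independent of the chosen subdivision. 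One checks that $\mathcal{W}(\cat{C})$ is a presheaf and that its restriction to $\strat{1}$ is a sheaf: since a covering sieve trivially covers each stratum, a colouring of $X$ is precisely a matching family of colourings of its $0$- and $1$-strata. Functoriality of $\mathcal{W}$ is clear.

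Finally I would produce the natural isomorphisms. The isomorphism $\mathcal{D}\mathcal{W}(\cat{C}) \cong \cat{C}$ holds essentially by construction: a colouring of $\pt$ is an object of $\cat{C}$ and a colouring of $\cube{}$ with prescribed endpoints is a morphism of $\cat{C}$, and one checks directly that composition, identities and dagger match. The isomorphism $\mathcal{W}\mathcal{D}(\wcat{A}) \cong \wcat{A}$ is the substantive direction and where I expect the main obstacle. Here one uses the sheaf condition to reduce $\wcat{A}(X)$, for arbitrary $X$, to a matching family of its values on points and intervals, so that $\wcat{A}$ is determined by $\wcat{A}(\pt)$ and $\wcat{A}(\cube{})$ together with the maps induced by the endpoint inclusions, the reflection and the subdivisions --- that is, by the dagger category $\mathcal{D}(\wcat{A})$. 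The real work is the combinatorial analysis of $\prestrat{1}$ needed to see that these generating maps impose exactly the relations of a dagger category and no others: one must classify the homotopy classes of prestratified self-maps of $\cube{}$ and verify, for instance, that a folding map $\cube{} \to \cube{}$ induces via the sheaf decomposition a dagger-composite such as $\alpha^\dagger\alpha$, and one must treat the circle $\sphere{1}$ with care, since the reflection of $\sphere{1}$ fixing the basepoint, being of degree $-1$, is not homotopic to the identity and so induces the genuinely nontrivial dagger $\alpha \mapsto \alpha^\dagger$ on the endomorphisms $\wcat{A}(\sphere{1})$. This is precisely the combinatorial bookkeeping carried out in detail in \cite{Smyth:2012fk}.
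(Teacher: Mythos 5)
Your proposal follows essentially the same route as the paper's sketch: $\mathcal{D}$ is built from $\wcat{A}(\pt)$ and $\wcat{A}(\cube{})$ with identities, composition and dagger induced by the collapse, the subdivided interval (the paper's $t\mapsto 2t$ into $[0,2]$ is just your $\cube{}\to\cube{}'$ in different clothing) and the reflection, while $\mathcal{W}$ is the same labelling/colouring construction, and the two natural isomorphisms are produced in the same way, with the same combinatorial details deferred to the cited thesis. Your remarks on the loop $\sphere{1}$ and on classifying prestratified self-maps of $\cube{}$ correctly identify where the deferred work lies, so the proposal is a faithful match to the paper's argument.
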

\begin{proof}[Sketch proof]
Given a Whitney $1$-category $\cat{A}$ we define a small dagger category $\mathcal{D}(\wcat{A})$ with objects $\wcat{A}(\pt)$ and morphisms $\wcat{A}([0,1])$. Source and target maps are induced from the inclusions $\imath_0$ and $\imath_1$ of $0$ and $1$ respectively into $[0,1]$. Identities on objects arise from the map induced from $p:[0,1] \to \pt$. Composition is induced from $c:[0,1] \to [0,2] : t \mapsto 2t$, where we stratify by the integer points and their complement. Note that the sheaf condition implies that 
$$
\wcat{A}([0,2]) \cong  \wcat{A}([0,1]) \times_{\wcat{A}(\pt)} \wcat{A}([0,1]) 
$$
is the set of composable pairs of morphisms. The dagger dual is induced from $d: [0,1] \to [0,1] : t \mapsto 1-t$. The various equations --- associativity of composition, triviality of composition with an identity, the equations satisfied by the dagger dual and so on --- arise from homotopies between prestratified maps. 

Given a functor between Whitney $1$-categories, in other words  a natural transformation $\eta: \wcat{A} \to \wcat{B}$, we define a functor $\mathcal{D}(\eta) : \mathcal{D}(\wcat{A}) \to \mathcal{D}(\wcat{B})$ by $a \mapsto \eta_\pt(a)$ on objects and $\alpha \mapsto \eta_{[0,1]}(\alpha)$ on morphisms. That this is  a dagger functor follows from the naturality square
$$
\xymatrix{
\wcat{A}([0,1]) \ar[r]^{\eta_{[0,1]}} \ar[d]_{d^*} & \wcat{B}([0,1]) \ar[d]^{d^*} \\
\wcat{A}([0,1]) \ar[r]_{\eta_{[0,1]}}& \wcat{B}([0,1]).
}
$$ 
The above constructions define a functor $\mathcal{D}: \whit{1} \to \dagcat$.

In the other direction, suppose $\cat{D}$ is a dagger category. We define a Whitney $1$-category $\mathcal{W}(\cat{D})$ by associating to a stratified space $X$ a set of equivalence classes of labellings of $X$ by objects and morphisms in $\cat{D}$. To assign a labelling we
\begin{enumerate}
\item  choose an orientation for each $1$-dimensional stratum of $X$;
\item label each $0$-dimensional stratum by an object of $\cat{D}$;
\item label each (oriented) $1$-dimensional stratum by a morphism of $\cat{D}$ compatibly with the objects labelling the endpoint(s).
\end{enumerate}
Two such labellings are equivalent if they have the same class under the equivalence relation generated by reversing the orientation of a $1$-dimensional stratum and replacing the labelling morphism by its dagger dual.

Given prestratified $f: X \to Y$ we define the map $f^*: \mathcal{W}(\cat{D})(Y) \to  \mathcal{W}(\cat{D})(X)$ by `pulling back' labellings from $Y$ to $X$. More precisely, we label a $0$-dimensional stratum in $X$ by the object labelling its image, necessarily a $0$-dimensional stratum, in $Y$. To assign a label to each (oriented) $1$-dimensional stratum in $X$ it in fact suffices to describe how to do so for the maps $p:[0,1] \to \pt$, 
$$
d: [0,1] \to [0,1] : t \mapsto 1-t
$$ and $[0,1] \to [0,n]: t \mapsto nt$. In these cases we assign respectively the identity on the object labelling the image point, the dagger dual of the morphism labelling the $1$-dimensional image stratum and the $n$-fold composite of the morphisms labelling the $1$-dimensional image strata. One can show that  $\mathcal{W}(\cat{D})$ is a presheaf on $\prestrat{1}$. The restriction to $\strat{1}$ is a sheaf, essentially because labellings are `local'.

Given a dagger functor $F:\cat{D} \to \cat{E}$ one can map a $\cat{D}$-labelling of $X$ to an $\cat{E}$-labelling by applying $F$ to each label. When $F$ is a dagger functor this respects the equivalence relation on labellings and yields a natural transformation $\mathcal{W}(F) : \mathcal{W}(\cat{D}) \to \mathcal{W}(\cat{E})$.  We have therefore defined a functor $\mathcal{W}: \dagcat \to \whit{1}$.

These constructions are inverse to one another. There is a natural isomorphism of dagger categories
$\cat{D} \to \mathcal{D} \mathcal{W} \left( \cat{D} \right)$ which is the identity on objects and takes a morphism $f$ to the class of the labelling of $[0,1]$, with standard orientation, and label $f$. In the other direction, consider fixed $X$ and orient the $1$-dimensional strata. For each stratum $S$ there is then a unique-up-to-homotopy characteristic map $\chi_S : [0,1]^{\dim S} \to X$ which is stratified and of degree one. Moreover, the sheaf condition implies that the map
$\wcat{A}(X) \to \mathcal{W}  \mathcal{D} \left( \wcat{A} \right)(X)$ 
taking $a$ to the obvious labelling of $X$ by the $\chi_S^*a$ is an isomorphism. These maps fit together to form a natural isomorphism $\wcat{A} \to \mathcal{W}  \mathcal{D} \left( \wcat{A} \right)$.
\end{proof}

\begin{example}
\label{transversal homotopy relationship}
Let $M$ be a Whitney stratified manifold with generic basepoint $p$, and let $\wcat{A}=\thcat{k,k+1}{M}$ be the transversal homotopy category defined in \S\ref{transversal homotopy theory}. Using the construction in \S \ref{monoidal whitney categories} one obtains a Whitney $1$-category $\Omega^k\wcat{A}$ with
$$
\Omega^k\wcat{A}(Y) = \{ f \in \thcat{k,k+1}{M}(\sphere{k} \times Y)  \ : \ f(0, y) = p \quad \forall y \in Y \} 
$$
The objects of the dagger category $\mathcal{D}\left( \Omega^k\wcat{A}\right)$ are germs of based transversal maps $\sphere{k} \to M$ and the morphisms are homotopy classes of germs of transversal maps $\sphere{k} \times \cube{} \to M$, mapping $0 \times \cube{}$ to the basepoint $p$, relative to the ends $\sphere{k}\times\{0,1\}$.  This is equivalent to the $k$th transversal homotopy category --- confusingly also denoted $\Psi_{k,k+1}(M)$ --- defined in \cite[\S 4]{MR2720181}. The only difference is that here we use germs of maps, whereas in \cite{MR2720181} smoothness of composites was ensured by imposing stronger boundary conditions.
\end{example}
\begin{example}
\label{tangle relationship}
Let $\wcat{A}=\tang{1}{2}$ be the Whitney $3$-category of framed $1$-dimensional tangles in codimension $2$ of \S\ref{tangles}. This is $2$-tuply monoidal and one can extract a Whitney $1$-category $\Omega^2\wcat{A}$, and from that a dagger category $\mathcal{D}\left(\Omega^2\wcat{A}\right)$. The objects of the resulting dagger category are finite sets of framed points in the open stratum of the sphere $\sphere{2}$. The morphisms are isotopy classes, relative to the boundary, of framed $1$-manifolds in $\sphere{2} \times \cube{} - 0 \times \cube{}$ with (possibly empty) boundary in $\sphere{2}\times \{0,1\}$. This is (a version of) the usual category of normally-framed tangles.
\end{example}

\section{The Tangle Hypothesis}
\label{tangle hypothesis}

Consider the Whitney $(n+k)$-category $\rep{\sphere{k}}$ represented by the stratified sphere $\sphere{k}$. It is a $k$-tuply monoidal Whitney $n$-category: if $\dim X < k$ then any prestratified map $X \to \sphere{k}$ must map $X$ to the point stratum, so $\rep{\sphere{k}}(X)$ has exactly one element.  The identity map $\id: \sphere{k} \to \sphere{k}$ determines a distinguished $\sphere{k}$-morphism.
\begin{lemma}
The Whitney $(n+k)$-category $\rep{\sphere{k}}$ is the free $k$-tuply monoidal Whitney $n$-category on one $\sphere{k}$-morphism.
\end{lemma}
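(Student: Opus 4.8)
The plan is to recognise this as a direct application of the Yoneda Lemma. First I would make the universal property explicit. To say that $\rep{\sphere{k}}$ is \emph{free on one $\sphere{k}$-morphism} is to say that the distinguished element, namely the class of the identity $\id \in \rep{\sphere{k}}(\sphere{k}) = \prestrat{n+k}(\sphere{k}, \sphere{k})$, enjoys the following property: for every $k$-tuply monoidal Whitney $n$-category $\wcat{B}$ and every $\sphere{k}$-morphism $b \in \wcat{B}(\sphere{k})$ there is a unique functor $F: \rep{\sphere{k}} \to \wcat{B}$ with $F_{\sphere{k}}(\id) = b$. Establishing this bijection $F \leftrightarrow b$ is the whole content of the lemma.

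The key observation is that a functor between Whitney categories is, by Definition \ref{wcat defn}, nothing more than a map of the underlying presheaves, since $\whit{n+k}$ sits as a \emph{full} subcategory of $\presh{\prestrat{n+k}}$. Hence the set of functors $\rep{\sphere{k}} \to \wcat{B}$ coincides with $\nat{\prestrat{n+k}(-, \sphere{k})}{\wcat{B}}$, the set of natural transformations of presheaves. The Yoneda Lemma identifies this set with $\wcat{B}(\sphere{k})$ via $\eta \mapsto \eta_{\sphere{k}}(\id)$, and this is precisely the desired bijection sending a functor to the image of the distinguished $\sphere{k}$-morphism.

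Finally I would note that nothing further need be checked on account of the $k$-tuply monoidal structure. As remarked in \S\ref{monoidal whitney categories}, a morphism of $k$-tuply monoidal Whitney $n$-categories is simply a functor of the ambient $(n+k)$-categories, with no additional compatibility to verify; so the Yoneda bijection, which holds for an arbitrary Whitney $(n+k)$-category $\wcat{B}$, certainly holds when $\wcat{B}$ is $k$-tuply monoidal. That $\rep{\sphere{k}}$ is itself such a category was verified immediately before the statement. Since the argument is a verbatim application of Yoneda, there is no genuine obstacle; the only point requiring care is the bookkeeping that $\whit{n+k}$ is a full subcategory, which is exactly what allows the Yoneda bijection—computed in the presheaf category—to descend to a statement about functors of Whitney categories.
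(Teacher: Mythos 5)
Your proof is correct and follows essentially the same route as the paper: both invoke the Yoneda Lemma, using the fact that $\whit{n+k}$ is a full subcategory of the presheaves so that functors of Whitney categories are exactly natural transformations, with the bijection given by evaluating at $\id_{\sphere{k}}$. Your version spells out the fullness point and the (vacuous) compatibility with the $k$-tuply monoidal structure a little more explicitly, but the substance is identical.
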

\begin{proof}
This follows from the Yoneda lemma. Given a $k$-tuply monoidal Whitney $n$-category $\wcat{A}$ and an $\sphere{k}$-morphism $a\in \wcat{A}(\sphere{k})$ there is a unique functor of Whitney $(n+k)$-categories with
$$
\rep{\sphere{k}}  \to \wcat{A} : [f: X \to \sphere{k} ]  \mapsto f^*a 
$$
mapping the distinguished element $\id_{\sphere{k}}$ to $a$.
\end{proof}

\begin{proposition}
\label{th}
The $k$-tuply monoidal Whitney $n$-categories $\tang{n}{k}$ and $\rep{\sphere{k}}$ are equivalent.
\end{proposition}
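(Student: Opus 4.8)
The plan is to exhibit a natural bijection between the presheaves $\tang{n}{k}$ and $\rep{\sphere{k}}$ on $\prestrat{n+k}$. Since a levelwise bijection of presheaves is automatically compatible with the restriction to the site $\strat{n+k}$, it is an isomorphism of Whitney $(n+k)$-categories, and in particular an equivalence (take the span with $\wcat{C}=\tang{n}{k}$ and one leg the identity). As both presheaves are already known to be $k$-tuply monoidal, this is exactly what is required. The bijection is the Pontrjagin--Thom correspondence. In one direction a prestratified $f:X\to\sphere{k}$ is sent to the preimage $f^{-1}(0)$ of the point stratum, framed by pulling back a fixed framing of the normal space of $0$ in $\sphere{k}$. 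In the other a framed codimension-$k$ tangle $T\subset X$ is sent to its collapse map $c_T:X\to\sphere{k}$, which via a framed tubular neighbourhood $\nu(T)\cong T\times\R^k$ sends $(t,v)\mapsto v\in\R^k\subset\sphere{k}$ and the complement of $\nu(T)$ to the antipode of $0$.

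The preimage direction is the more convenient to take as the underlying natural transformation $\Phi:\rep{\sphere{k}}\to\tang{n}{k}$, because preimages are strictly compatible with pullback: for prestratified $g:X'\to X$ one has $g^{-1}\bigl(f^{-1}(0)\bigr)=(fg)^{-1}(0)$, which is precisely the action of $g^*$ on $\tang{n}{k}$, and the paper has already observed that the prestratified pullback of a tangle transversal to all strata is again such a tangle. The first step is therefore to make $[f]\mapsto[f^{-1}(0)]$ well-defined on homotopy classes. A prestratified $f$ need not meet $0$ transversally, so I would first homotope it, through prestratified maps and relative to strata of dimension $<n+k$, to a map whose restriction to every stratum of $X$ is transversal to $0$; Thom transversality applied inductively over skeleta supplies such a perturbation, and the Whitney conditions allow it to be arranged compatibly across the frontier of each stratum so that the result is again prestratified. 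For such a representative $f^{-1}(0)$ is a genuine framed codimension-$k$ submanifold transversal to all strata, and two transversal representatives of one homotopy class are joined by a transversal homotopy, so the isotopy class of the tangle is independent of the choices.

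The second step is to check that $\Phi$ is bijective at each $X$, using the collapse construction as an inverse. For surjectivity, given a framed tangle $T$ the collapse map $c_T$ is transversal to $0$ with $c_T^{-1}(0)=T$ as framed submanifolds, so $\Phi[c_T]=[T]$; one must verify that, up to the permitted homotopy, $c_T$ represents a class in $\rep{\sphere{k}}(X)$, i.e.\ is prestratified. For injectivity, if $f_0^{-1}(0)$ and $f_1^{-1}(0)$ are ambient isotopic relative to strata of dimension $<n+k$, the isotopy together with the standard Pontrjagin--Thom argument yields a homotopy $f_0\simeq f_1$ through prestratified maps relative to the same strata. Throughout I would match the relative data carefully: the isotopies defining $\tang{n}{k}$ are taken relative to strata of dimension $<n+k$ and the framing is fixed over intersections with the lower strata, and these correspond exactly to the ``rel strata of dimension $<n+k$'' homotopies defining morphisms in $\prestrat{n+k}$ and to the framing carried over to maps into the pointed sphere.

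The main obstacle I expect is reconciling the classical smooth Pontrjagin--Thom theorem with the (pre)stratified bookkeeping: proving that the transversalising perturbations and the collapse map can be taken prestratified, and that the homotopies realising the correspondence run through prestratified maps relative to the lower strata. This must be carried out stratum by stratum, controlling the normal structure to each stratum via Thom's first isotopy lemma and exploiting the fact that a stratified map cannot raise dimension, so that only the top-dimensional strata are free to move under the permitted homotopies. Once $\Phi$ is shown to be a natural levelwise bijection, the isomorphism, and hence the equivalence, of Whitney categories is immediate.
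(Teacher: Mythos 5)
Your overall strategy --- Pontrjagin--Thom, with preimages in one direction and collapse maps in the other --- is the paper's, but two of your steps fail. First, taking the preimage of the point stratum $0$ is the wrong construction for $\rep{\sphere{k}}$. For a prestratified $f$ the preimage $f^{-1}(0)$ is a union of strata of a subdivision of $X$, not in general a codimension-$k$ submanifold transversal to the strata of $X$: for example every prestratified map $\sphere{k}\to\sphere{k}$ must send the point stratum to $0$ (a point cannot map submersively into the $k$-dimensional open stratum), so $\id^{-1}(0)=\{0\}$ is not transversal to the point stratum and is not an element of $\tang{n}{k}(\sphere{k})$. Your proposed fix --- perturbing $f$ to be transversal to $0$ through prestratified maps --- cannot succeed, because every map in the prestratified homotopy class still sends the point stratum to $0$. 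The paper instead takes the preimage of a \emph{generic} point $p$ in the open stratum: a prestratified map is automatically submersive over the open stratum wherever it meets it, so $f^{-1}(p)$ is a framed codimension-$k$ tangle with no perturbation needed and no choices beyond $p$. (The preimage of the stratum $0$ is the right construction for comparing with $\thcat{k,n+k}{\sphere{k}}$ in \S\ref{transversal homotopy of spheres}, where maps are by definition transversal to $0$; the paper explicitly contrasts the two situations.)

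Second, the claim that the comparison map is a levelwise bijection, hence an isomorphism of presheaves, is false, so the whole framing of your argument needs to change. When $\dim X<n+k$ the homotopies defining morphisms in $\prestrat{n+k}$ must be constant on every stratum of $X$, so $\rep{\sphere{k}}(X)$ is the raw set of (germs of) prestratified maps $X\to\sphere{k}$ with no identification, while a tangle in such an $X$ carries only a fixed trivialisation of its normal bundle. Two collapse maps for the same framed point of $\sphere{k}$ that differ by a reparametrisation away from that point then have identical framed preimages but are distinct, non-homotopic morphisms; $F_p(X)$ is surjective but badly non-injective. This is exactly why the paper uses the inductive notion of $(n+k)$-equivalence of Definition \ref{equivalent}: at each stage one needs only surjectivity on objects, then passes to hom-categories (maps and tangles on $\cube{}\times X$ with boundary conditions), and genuine bijectivity is needed, and holds, only in the base case of cubes $[0,1]^{n+k}$ relative to their boundary, where it is the classical relative Pontrjagin--Thom statement that collapse maps exist and are unique up to homotopy. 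Your argument would need to be restructured along these lines rather than aiming at a presheaf isomorphism.
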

\begin{proof}
We use the Pontrjagin--Thom construction. Fix a generic point $p \in \sphere{k}$. If $f: X \to \sphere{k}$ is prestratified then it is transversal to $p$, because it is submersive onto the open stratum whenever $f^{-1}(p)\neq \emptyset$. Thus the pre-image $f^{-1}(p)$ is (a stable germ of) a framed codimension $k$ submanifold of $X$ which is transversal to all strata. If $f$ and $g$ are homotopic relative to strata of dimension $<n+k$ through prestratified maps then the pre-images $f^{-1}(p)$ and $g^{-1}(p)$ are  isotopic relative to strata of dimension $<n+k$. The assignment $[f] \mapsto [f^{-1}(p)]$ determines a functor
$$
F_p: \rep{\sphere{k}} \to \tang{n}{k}.
$$
Conversely, given a codimension $k$ framed submanifold $T$ of $X$, transversal to all strata, we can construct a prestratified `collapse map' $f:X \to \sphere{k}$ so that $T=f^{-1}(p)$, a tubular neighbourhood of $T$ fibres over the open stratum of $\sphere{k}$ and the complement of this neighbourhood maps to the point stratum. It follows that $F_p(X)$ is surjective for any $X$, in particular for $X=\pt$. Hence $F_p$ is an $(n+k)$-equivalence if and only if the induced functor
\begin{equation}
\label{induced equivalence?}
\rep{\sphere{k}}([f],[g]) \to \tang{n}{k}([f^{-1}(p)],[g^{-1}(p)]) 
\end{equation}
is an $(n+k-1)$-equivalence for any $[f],[g] \in \rep{\sphere{k}}(\pt)$. (Of course there is only one prestratified map $\pt \to \sphere{k}$, and for this the preimage of $p$ is empty. However, we wish to make an inductive argument and the boundary conditions will not always be `trivial' in this way, so we do not  make any assumptions about $f$ and $g$ at this point.)

Checking whether we have an $(n+k-1)$-equivalence in (\ref{induced equivalence?}) is very similar to checking whether $F_p$ is an $(n+k)$-equivalence. The difference is that now we consider prestratified maps $\cube{} \times X \to \sphere{k}$ on the one hand and framed tangles in $\cube{} \times X$ on the other, with boundary conditions on $\{0,1\}\times X$  given by the maps $f$ and $g$, and the preimage tangles $f^{-1}(p)$ and $g^{-1}(p)$ respectively. 

The existence of prestratified collapse maps, extending given ones on the boundary, shows that the induced functor is surjective on objects. So we reduce to checking whether it induces an appropriate $(n+k-2)$-equivalence. Proceeding inductively, we reach the base cases. These concern the map of sets $[f] \mapsto [f^{-1}(p)]$ from homotopy classes of transversal maps $f: [0,1]^{n+k} \to \sphere{k}$, where the restriction of $f$ to the boundary is some fixed map, $\varphi$ say, to the set of isotopy classes of framed codimension $k$ tangles in $[0,1]^{n+k}$, where the boundary tangle is $\varphi^{-1}(p)$. We can always construct a prestratified collapse map such a tangle $T$, extending the given map on the boundary. Indeed, the collapse map is unique up to homotopy through prestratified maps. Moreover, given an isotopy $h_t: [0,1]^{n+k} \to [0,1]^{n+k}$ relative to the boundary, and such that $h_t(T)$ is transversal to all strata for each $t\in [0,1]$, we can construct a family of collapse maps for the the tangles $h_t(T)$ yielding a homotopy between a collapse map for $T=h_0(T)$ and a collapse map for $h_1(T)$. Hence in the base case there is a bijection between isotopy classes of framed tangles and homotopy classes of prestratified collapse maps (each with appropriate boundary conditions). It follows that $F_p$ is an $(n+k)$-equivalence.
\end{proof}

\subsection{Transversal Homotopy Categories of Spheres}
\label{transversal homotopy of spheres}

A minor variant of this proof of the Tangle Hypothesis relates categories of framed tangles to the transversal homotopy categories of spheres. More precisely, taking the pre-image of the point stratum $0 \in \sphere{k}$ induces a functor
$$
F: \thcat{k,n+k}{\sphere{k}} \to \tang{n}{k} : [f] \mapsto [f^{-1}(0)].
$$
There are two differences from the functor $F_p$. Firstly the r\^oles of the basepoint and stratum have been switched:  prestratified maps to $\sphere{k}$ are transversal to the generic basepoint $p$ rather than to the stratum $0$. Secondly, prestratified maps are submersive not just at $p$ but onto the entire open stratum, whereas transversal maps to $\sphere{k}$ are only required to be submersive at the point stratum $0$. 
\begin{proposition}
The functor $F$ is an $(n+k)$-equivalence.
\end{proposition}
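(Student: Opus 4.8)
The plan is to follow the proof of Proposition \ref{th} almost line for line, the two differences recorded above being the only points that need attention. As there, I would show that $F$ is surjective on objects and induces an $(n+k-1)$-equivalence on each category of morphisms; the inductive argument of Definition \ref{equivalent}, identical to that used for $F_p$, then reduces everything to a bijection in the base case. The first difference --- that we take the preimage of the point stratum $0$ rather than of a generic point $p$ --- is cosmetic, since both are codimension $k$ points of $\sphere{k}$ and the collapse construction is symmetric in the two; one simply interchanges the roles of $0$ and $p$ throughout.

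For surjectivity, given a framed codimension $k$ tangle $T\subset X$ transversal to all strata, the Pontrjagin--Thom construction produces a map $g:X\to\sphere{k}$ sending $T$ to the point stratum $0$, a tubular neighbourhood of $T$ to $\sphere{k}$ via the framing, and the complement of that neighbourhood, together with its boundary, to the basepoint $p$. This $g$ is transversal to $0$ and sends every stratum of dimension $<k$ to $p$, hence represents an element of $\thcat{k,n+k}{\sphere{k}}(X)$ with $F[g]=[T]$. In contrast to the case of $F_p$, we do not need $g$ to be submersive onto the whole open stratum, only transversal to $0$, which is exactly what the standard collapse map satisfies; in this respect the construction here is more natural. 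The same reduction then brings us to the base case: the map $[g]\mapsto[g^{-1}(0)]$ from transversal-homotopy classes of transversal maps $\cube{n+k}\to\sphere{k}$ with prescribed restriction to $\partial\cube{n+k}$, to boundary-relative isotopy classes of framed tangles in $\cube{n+k}$. I would prove this is a bijection as in the classical Pontrjagin--Thom theorem, the collapse construction furnishing the inverse; applied to an ambient isotopy $h_t(T)$ it yields a transversal homotopy of collapse maps, so the inverse descends to isotopy classes.

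The main obstacle is the second difference. Because a transversal map need only be transversal to $0$, there are far more of them than there are prestratified maps, and one must check that this extra freedom produces no extra homotopy classes. The crucial lemma is that every transversal map $g$ is transversally homotopic to the collapse map of its zero set $T=g^{-1}(0)$. Near $T$ the map is submersive, so one homotopes it --- keeping $T$ fixed, hence staying transversal to $0$ --- until it agrees with the collapse map on a tubular neighbourhood of $T$. On the complement of that neighbourhood $g$ takes values in the open stratum $\sphere{k}\setminus\{0\}\cong\R^k$, which is contractible, so one contracts $g$ there to the constant map $p$, remaining in the open stratum and thus trivially transversal to $0$ throughout. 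Gluing the two homotopies along the boundary of the tubular neighbourhood, and checking compatibility with the prescribed boundary values, exhibits $g$ as transversally homotopic to a collapse map. It is the contractibility of the open stratum that renders the extra flexibility of transversal maps homotopically inessential, and this is the one place where the argument genuinely uses the geometry of $\sphere{k}$ rather than merely transcribing the proof of Proposition \ref{th}.
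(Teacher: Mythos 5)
Your proposal is correct and follows the same route as the paper, which likewise reduces to the argument for $F_p$ with $0$ replacing $p$ and transversal maps replacing prestratified ones. The one point the paper treats by citation --- that transversal collapse maps exist and are unique up to homotopy through transversal maps, for which it refers to \cite[Appendix A]{MR2720181} --- is exactly the lemma you isolate and correctly sketch via submersivity near $T=g^{-1}(0)$ and contractibility of the open stratum away from it.
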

\begin{proof}
The proof is almost word-for-word the same as that of Proposition \ref{th}, but with $0$ replacing $p$, and with transversal maps to $\sphere{k}$ replacing prestratified maps. The key point is that one can construct transversal collapse maps for framed tangles, and that these are unique up to homotopy through such maps. See \cite[Appendix A]{MR2720181} for details.
\end{proof}

\subsection{Other flavours of tangles}
\label{other flavours}

Thus far we have considered only framed tangles, however there are variants of the Tangle Hypothesis for oriented tangles, unoriented tangles and so on. To make this more precise, fix a subgroup $G \subset O_k$. The most interesting examples come from stable representations $G_* \to O_*$. Then we can define a $k$-tuply monoidal Whitney $n$-category $\gtang{n}{k}$ of codimension $k$ tangles whose normal bundles have structure group reducing to $G$, or $G$-tangles for short. The framed case corresponds to taking $G=1$, at the other extreme $G=O_k$ corresponds to `plain' tangles with no special normal structure. The group $G$ acts on $\sphere{k}$, considered as $\R^k \cup \{\infty\}$, by prestratified maps fixing the point stratum. Hence there is an induced action on $\wcat{A}(\sphere{k})$ for any Whitney category $\wcat{A}$, and one may speak of $G$-invariant $\sphere{k}$-morphisms. 

In these terms we formulate the Tangle Hypothesis for $G$-tangles as saying that  
\begin{quote}
The Whitney category $\gtang{n}{k}$ is equivalent to the free $k$-tuply monoidal Whitney $n$-category on one $G$-invariant $\sphere{k}$-morphism.
\end{quote}
 Unfortunately it is not straightforward to mimic the proof of the framed case. The difficulty is in finding $X$ with the property $\wcat{A}(X) \cong \wcat{A}(\sphere{k})^G$. The naive candidate is $X=\sphere{k}/G$, but since the action is not free one should presumably consider instead the stack $[\sphere{k}/G]$. Thus one is led to enlarging  the category $\prestrat{n}$ to include suitable stratified smooth stacks. Rather than pursue this, we outline an alternative, more elementary, argument. 

Given a $k$-tuply monoidal Whitney $n$-category $\wcat{A}$ and  $G$-invariant $a\in \wcat{A}(\sphere{k})$ we wish to construct a functor $F:\gtang{n}{k} \to \wcat{A}$ which maps the point $G$-tangle in $\sphere{k}$ to $a$, and moreover to show that such a construction is essentially unique. Thus for each $G$-tangle $T\in \gtang{n}{k}(X)$ we must construct an element $F(T) \in \wcat{A}(X)$, in a canonical fashion. Begin by choosing a small disk-bundle neighbourhood of $T$ in $X$, such that the boundary meets only those strata which $T$ does, and meets these transversely. Next choose a cellular decomposition of the submanifold $T$, subdividing the stratification induced from $X$, for instance by choosing a compatible triangulation. Decompose the disk-bundle neighbourhood into product cells $C \times D^k$ where $C$ is a cell in $T$ and $D^k$ the standard $k$-ball. Finally extend this to a cellular decomposition of $X$ subdividing the original stratification, and denote the subdivision by $s:X \to X'$. Define an element of $\wcat{A}(X')$ by giving a matching family for each cell of this decomposition as follows. For product cells $C\times D^k$ in the disk-bundle neighbourhood assign $\pi^*a$ where 
$$
\pi : C \times D^k \to \sphere{k}
$$
is the composite of second projection and collapse of the disk's boundary. This assignment is forced by the requirement that the point $G$-tangle in $\sphere{k}$ maps to $a$, and this gives rise to the uniqueness of $F$. For cells outside the disk-bundle neighbourhood assign the unit (\ie the pullback of the unique element in $\wcat{A}(\pt)$ under the map to a point). The $G$-invariance of $a$ ensures that the elements assigned to cells in the disk-bundle neighbourhood match. Composing via $s^*:\wcat{A}(X') \to \wcat{A}(X)$ yields the required $F(T)$. Figure \ref{tangle hyp} illustrates the construction.
\begin{figure}
\vspace{.2in}
\centerline {
\includegraphics[width=2in]{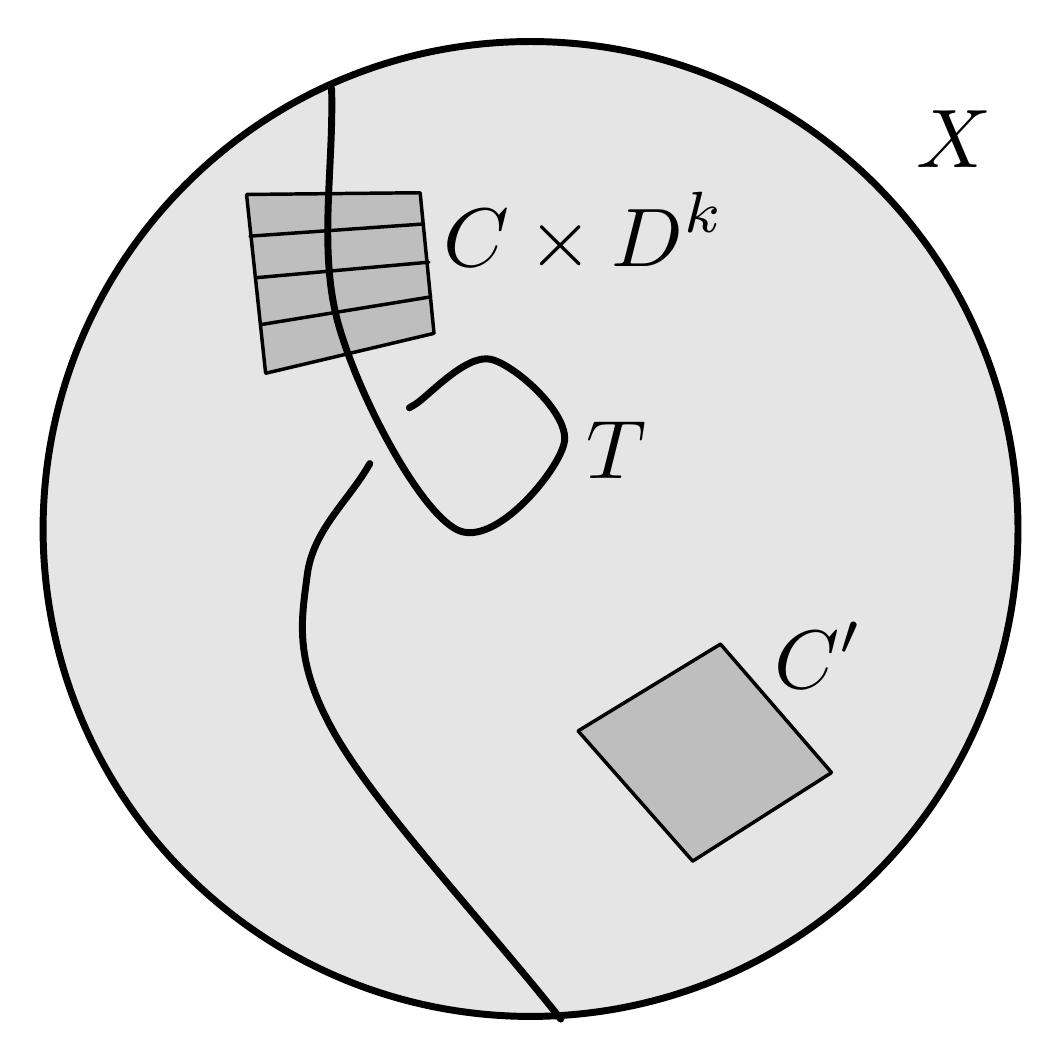}
}
\vspace{.2in}
\caption{The construction of $F(T)$ from a $G$-tangle $T$ in $\gtang{n}{k}(X)$. To a product cell in the disk-bundle neighbourhood of $T$ assign $\pi^*a \in \wcat{A}(C\times D^k)$ where $\pi : C \times D^k \to \sphere{k}$ is the composite of projection and collapse of the disk's boundary, and to other cells  assign the unit in $\wcat{A}(C')$. Then compose to obtain an element $F(T)$ in $\wcat{A}(X)$.}
\label{tangle hyp}
\end{figure}

Of course there are many technical issues. One must choose the disk-bundle neighbourhoods carefully. The cleanest approach is  to define $F$ on an auxiliary category of `$G$-tangles with disk-bundle neighbourhoods' and then show that the forgetful functor from this to $\gtang{n}{k}$ is an equivalence. In addition one must show that $F(T)$ is independent of the choice of cellular decomposition. This would follow from the existence of common subdivisions, so one must include sufficient hypotheses to ensure this property, for instance by fixing a PL structure and working with PL stratifications. Finally one needs to prove uniqueness.

The connection with transversal homotopy theory goes through more easily. Let $\thom{G}$ be the  Thom space of the universal bundle on the classifying space of $G$-bundles.  Stratify the Thom space by the classifying space (embedded as the zero section) and its complement. (In practice more care is required. One must choose a finite-dimensional smooth manifold model for the classifying space, which suffices to classify $G$-bundles over manifolds of dimension $\leq n$. Then one works with the `fat' Thom space, as defined in \cite[\S2]{MR2720181}, constructed from this model. The point is that to define transversal homotopy theory one needs to work with  a Whitney stratified manifold.) Proceeding as in \S\ref{transversal homotopy of spheres}, but with $0 \in \sphere{k}$ replaced by $BG \subset \thom{G}$ throughout, one obtains an $(n+k)$-equivalence
$$
\gtang{n}{k} \simeq \thcat{k,n+k}{\thom{G}}
$$
generalising that for $n=1$ in \cite[\S 5]{MR2720181}.

\end{document}